\patchcmd\Gread@eps{\@inputcheck#1 }{\@inputcheck"#1"\relax}{}{}
\newtheorem{thm}{Theorem}[section]
\newtheorem{prop}[thm]{Proposition}
\newtheorem{cor}[thm]{Corollary}
\newtheorem{lemma}[thm]{Lemma}
\newtheorem*{thm*}{Theorem}						
\newtheorem*{prop*}{Proposition}
\newtheorem*{lemma*}{Lemma}
\newtheorem*{cor*}{Corollary}
\newtheorem*{conj*}{Conjecture}
\theoremstyle{definition}
\newtheorem{definition}[thm]{Definition}
\theoremstyle{remark}
\renewcommand{\P}{\mathbb{P}}
\let\hom\relax 
\DeclareMathOperator{\hom}{hom}
\DeclareMathOperator{\Hom}{Hom}
\DeclareMathOperator{\Ext}{Ext}
\DeclareMathOperator{\ext}{ext}
\DeclareMathOperator{\pic}{Pic}
\let\O\relax
\DeclareMathOperator{\O}{\mathcal{O}}
\DeclareMathOperator{\rk}{rk}
\DeclareMathOperator{\gr}{gr}
\let\c@theorem\c@figure
\begin{document}

\title[Non--globally generated bundles on curves]{Non--globally generated bundles on curves}

\author[J. Kopper]{John Kopper}
\address{Department of Mathematics, The Pennsylvania State University, University Park, PA, 16802}
\email{kopper@psu.edu}

\author[S. Mandal]{Sayanta Mandal}
\address{Chennai Mathematical Institute, H1, SIPCOT IT Park, Siruseri Kelambakkam 603103, India}
\email{smanda9@uic.edu}

\title{Non--globally generated bundles on curves}

\thanks{During the preparation of this article, both authors were partially supported by NSF RTG grant DMS-1246844.}
\subjclass[2010]{Primary: 14H60. Secondary: 14D20, 14F05, 14H51.}
\keywords{Moduli spaces of stable vector bundles, globally generated vector bundles, curves}

\begin{abstract}
We describe the locus of stable bundles on a smooth genus $g$ curve that fail to be globally generated. For each rank $r$ and degree $d$ with $rg<d<r(2g-1)$, we exhibit a component of the expected dimension. We show moreover that no component has larger dimension and give an explicit description of those families of smaller dimension than expected. For large enough degrees, we show that the locus is irreducible.
\end{abstract}

\maketitle
\setcounter{tocdepth}{1}
\tableofcontents

\section{Introduction}
The central goal of classical Brill-Noether theory on curves and its higher-rank analogues is to describe loci of vector bundles possessing unexpectedly many global sections. In this paper we complement this study by describing the locus of stable vector bundles that fail to be globally generated. In the case of line bundles, the picture is quite clear (\S\ref{sec:linebundles}):

\begin{prop}
Let $C$ be a smooth projective curve of genus $g \geq 2$ and $d$ an integer. Then the following hold.
\begin{enumerate}[(a)]
    \item If $d \leq g$, then the general line bundle of degree $d$ is not globally generated.
    \item If $g+1 \leq d \leq 2g-1$, then the general line bundle of degree $d$ is globally generated and the locus of line bundles with basepoint has codimension $d-g$ in $\pic^d(C)$.
    \item If $d \geq 2g$, then every line bundle of degree $d$ is globally generated.
\end{enumerate}
\end{prop}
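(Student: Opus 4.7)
\emph{Sketch.} I would treat the three parts in the order (c), (a), (b), since (b) is the substantive case.

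For (c), the argument is Riemann--Roch with Serre duality. For $d\geq 2g$ and any $p\in C$, the bundle $(K_C\otimes L^{-1})(p)$ has degree $2g-1-d<0$, so $h^1(L(-p))=0$; Riemann--Roch then gives $h^0(L(-p))=d-g=h^0(L)-1$, so the evaluation $H^0(L)\to L|_p$ surjects and $L$ is globally generated.

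For (a), when $d\leq g-1$ the Abel--Jacobi image $W^0_d\subset\pic^d(C)$ has dimension at most $d<g$, so a general $L$ has $h^0(L)=0$ and is a fortiori not globally generated. For $d=g$ one checks $W^1_g\subsetneq\pic^g(C)$ by taking a general effective divisor $D$ of degree $g$: the class $K_C-D$ is then a general element of $\pic^{g-2}(C)$, hence has no sections (by the same Abel--Jacobi count), whence $h^0(\mathcal{O}_C(D))=1$ by Riemann--Roch. The unique section of a general $L$ of degree $g$ vanishes at $g\geq 2$ points, all basepoints.

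For (b), the key reformulation is via Serre duality: from the cohomology sequence of $0\to L(-p)\to L\to L|_p\to 0$, the point $p$ is a basepoint of $L$ iff $h^0(L(-p))=h^0(L)$, equivalently $h^0((K_C\otimes L^{-1})(p))=h^0(K_C\otimes L^{-1})+1$. In the range $g+1\leq d\leq 2g-1$ the bundle $K_C\otimes L^{-1}$ has degree $2g-2-d\leq g-3<g$, so the argument of (a) gives $h^0(K_C\otimes L^{-1})=0$ for a general $L$; on that open set, $L$ has a basepoint iff $(K_C\otimes L^{-1})(p)$ is effective for some $p$, i.e., $L\sim K_C+p-D$ for some effective $D$ of degree $2g-1-d$. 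Up to the Brill--Noether jump locus $\{L:h^0(K_C\otimes L^{-1})\geq 1\}$, of dimension at most $2g-2-d$, the basepoint locus $B_d$ is therefore the image of
\[
\Phi\colon C\times C^{(2g-1-d)}\longrightarrow \pic^d(C),\qquad (p,D)\mapsto \mathcal{O}_C(K_C+p-D),
\]
whose domain has dimension $2g-d$. This immediately gives $\operatorname{codim}(B_d,\pic^d(C))\geq d-g>0$, proving global generation for the general $L$. For the reverse inequality, I would verify that $\Phi$ is generically finite onto its image: the fiber over $L$ is indexed by pairs $(p,D)$ with $p$ a basepoint of $|L|$ and $D$ the unique effective divisor in $|(K_C\otimes L^{-1})+p|$ (which has $h^0=1$ for general such $L$), and since the base locus of a complete linear system on $C$ is a proper closed subscheme it is finite. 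The jump stratum has strictly smaller dimension $2g-2-d<2g-d$ and does not affect the top-dimensional count. The one delicate step is this fiber-finiteness of $\Phi$; the rest is Serre duality plus the Abel--Jacobi dimension count from (a).
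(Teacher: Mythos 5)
Your argument is correct and follows essentially the same route as the paper: both parts (a) and (c) are the standard Riemann--Roch/Serre duality facts, and for (b) both proofs realize the basepoint locus as the image of $(p,D)\mapsto \mathcal{O}_C(K+p-D)$ from $C\times C^{(2g-1-d)}$ and establish generic finiteness of this map. The one point worth making explicit in your sketch is that the image of $\Phi$ is not contained in the jump locus $\{h^0(K_C\otimes L^{-1})>0\}$ --- take $D$ general effective with $h^0(D)=1$ and $p\notin\operatorname{supp}(D)$, so that $h^0(K_C\otimes L^{-1})=h^0(D-p)=0$ --- which is needed both so that your fiber computation applies to a general point of the image and so that a general point of the image genuinely lies in $B_d$; the paper secures this by exhibiting the same explicit family at the end of its proof.
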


The locus of non--globally generated line bundles can often be described explicitly. For example, when $d=2g-1$, the locus is isomorphic to the curve $C$. This follows from the fact that every degree $2g-1$ line bundle with a basepoint $p \in C$ is of the form $\O(K+p)$, where $K$ is a canonical divisor.

The problem for higher-rank bundles is more subtle. While statements directly analogous to (a) and (c) in the above proposition still hold (Prop. \ref{prop:easy_vb_facts}), it is harder to understand the locus of non--globally generated bundles when the general bundle is globally generated. For example, given a non--globally generated line bundle $L$, vector bundles of the form $E = F \oplus L$ are never globally generated, and one can produce families of such bundles in arbitrary degree. Another difficulty is that global generation is not well-defined for S-equivalence classes (Prop. \ref{propn:global_generation_S_equiv}). Our approach is to study the locus $N_C(r,d) \subset U_C(r,d)$ of \emph{stable} bundles of rank $r$ and degree $d$ that fail to be globally generated. When $rg+1 \leq d \leq r(2g-1)-1$, this locus is non-empty (Prop. \ref{thm:existence_stable_seq}), and has codimension at least one (Prop. \ref{prop:easy_vb_facts}).

Using a theorem of Sundaram, we can produce an upper bound on $\dim N_C(r,d)$. A stable $E \in U_C(r,d)$ has a basepoint at $p$ if and only if $h^1(E(-p))>0$, or equivalently, $h^0(E^\ast(K+p)) > 0$. If $rg+1 \leq d \leq r(2g-1)-1$, then the vector bundle $E^\ast(K+p)$ has degree $d'=r(2g-1)-d$ satisfying
\[
1 \leq d' \leq r(g-1)-1.
\]
Thus the Brill-Noether locus
\[
W^0_{r,d'} = \{F \in U_C(r,d'): h^0(F) \geq 1\}
\]
surjects onto the locus of $E \in U_C(r,d)$ with basepoint at $p$ via the map $U_C(r,d') \to U_C(r,d)$ defined by $F \mapsto F^\ast(K+p)$. 

A theorem of Sundaram \cite[Thm II.3.1]{sundaram} says that $W^0_{r,d'}$ has a unique component of maximal dimension
\[
r^2(g-1)+1 - (rg-r-d'+1) = \dim U_C(r,d) - (d-rg+1).
\] 
By varying $p$ along $C$, we conclude that the dimension of $N_C(r,d)$ is bounded above by
\begin{equation}\label{eqn:expected_dimension}
    r^2(g-1)+1 - (d-rg).
\end{equation}

We call (\ref{eqn:expected_dimension}) the \emph{expected dimension} of $N_C(r,d)$, and $d-rg$ its \emph{expected codimension}. Our goal is to study the dimension and irreducible components of $N_C(r,d)$. The main results are summarized in the following theorem.

\begin{thm}[\protect{\ref{prop:determinantal_description}, \ref{cor:family_of_exp_dim}, \ref{thm:irreducibility_n0}}]
Let $C$ be a smooth projective curve of genus $g\geq 2$, and $r$ and $d$ integers satisfying $r \geq 2$ and $rg+1 \leq d \leq r(2g-1)-1$. Then we have the following:
\begin{enumerate}[(a)]
    \item $N_C(r,d)$ is nonempty and has a component of the expected dimension and no component of larger dimension.
    \item Let $N_C^0(r,d) \subset N_C(r,d)$ denote the set of stable, non--globally generated $E$ with $h^1(E) = 0$. Then $N_C^0(r,d)$ is nonempty, has a component of the expected dimension, and no component of codimension greater than $d-rg+1$.
    \item Suppose $rg+g-1 \leq d \leq r(2g-1)-1$. Then $N_C(r,d)$ is irreducible of the expected codimension $d-rg$.
\end{enumerate}
\end{thm}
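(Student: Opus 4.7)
The plan is to rely throughout on the Brill-Noether reduction from the introduction: the surjection $\phi \colon W^0_{r,d'} \times C \to N_C(r,d)$ given by $(F,p) \mapsto F^*(K+p)$, where $d' = r(2g-1) - d$.

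For part (a), the upper bound on $\dim N_C(r,d)$ is either the one derived in the introduction from Sundaram's theorem applied to $\phi$, or (more directly) a Fulton-style codimension bound on $N_C(r,d)$ viewed as a degeneracy locus of an evaluation map of locally free sheaves on a parameter space dominating $U_C(r,d) \times C$. For existence of a component of the expected dimension, take $F$ a generic point of a maximal component of $W^0_{r,d'}$ (unique by Sundaram). Then $E = F^*(K+p)$ is stable and has a basepoint at $p$, and a generic-finiteness check on $\phi$ upgrades this to a component of the correct dimension. Part (b) runs in parallel: arrange the construction so that the generic $E$ in the constructed family has $h^1(E) = 0$, and the codimension bound $d-rg+1$ follows because $h^1(E) = 0$ is an open condition on $U_C(r,d)$, so intersecting with it drops the dimension of any component of $N_C(r,d)$ by at most one.

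For part (c), the hypothesis $d \geq rg+g-1$ translates to $d' \leq (r-1)(g-1)$. In this range, I would prove that $W^0_{r,d'}$ is irreducible by parametrizing it via extensions: a generic $F \in W^0_{r,d'}$ has a unique section up to scaling and fits in an exact sequence $0 \to \O \to F \to Q \to 0$ with $Q$ stable of rank $r-1$ and degree $d'$. The parameter space of such pairs (moduli of $Q$ together with an extension class) is irreducible and dominates $W^0_{r,d'}$. Since $\phi$ is set-theoretically surjective onto $N_C(r,d)$, irreducibility of $W^0_{r,d'} \times C$ forces irreducibility of $N_C(r,d)$; the codimension equals the expected codimension by (a).

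The main obstacle lies in (c): ruling out additional components of $W^0_{r,d'}$ arising from sections that vanish on a positive divisor $D > 0$. Such sections give inclusions $\O(D) \hookrightarrow F$, and for larger $\deg D$ they either conflict with the stability of $F$ or contribute loci of strictly smaller dimension than the principal one. Quantifying this dimension drop via a deformation or tangent-space argument is the technical heart of the proof. A secondary point, relevant to (a) and (b), is confirming that $\phi$ is generically finite onto its image, so that the dimension of the constructed component matches the dimension of the source.
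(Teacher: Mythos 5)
Your overall frame---the correspondence $(F,p)\mapsto F^\ast(K+p)$ from $W^0_{r,d'}\times C$ to $N_C(r,d)$, Sundaram for the upper bound on dimension, and an extension-theoretic parametrization for irreducibility---is consistent with the paper (your sequences $0\to\O\to F\to Q\to 0$ are the Serre duals of the paper's $0\to F\to E\to\O(K+p-D)\to 0$). But there are two genuine gaps. The first is in (b): the assertion ``no component of $N_C^0(r,d)$ has codimension greater than $d-rg+1$'' is a \emph{lower} bound on the dimension of every component, and your justification (``$h^1(E)=0$ is an open condition, so intersecting drops the dimension by at most one'') does not produce such a bound---intersecting a component with a nonempty open set leaves its dimension unchanged, and you have no a priori lower bound on the dimensions of components of $N_C(r,d)$ to start from. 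The correct mechanism is the one you mention but point in the wrong direction: over the open locus $Y$ where $h^1=0$, the pushforward of the Poincar\'e family (on an \'etale cover, via Narasimhan--Ramanan) is locally free of rank $\chi(E)=d+r(1-g)$, the evaluation maps assemble into a morphism of bundles of ranks $\chi(E)$ and $r$ on $Y\times C$, and the degeneracy-locus bound gives codimension at most $d-rg+1$ for every component of the incidence variety $\Sigma\subset Y\times C$; projecting to $Y$ and separating generically finite fibers from fibers equal to $C$ yields exactly the dichotomy in (b). Note that a Fulton-type bound can never give the \emph{upper} bound on dimension needed in (a)---for that only the Sundaram route works.

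The second gap is in (c). Showing that the strata of $W^0_{r,d'}$ where the section vanishes on a divisor $D>0$ have strictly smaller dimension than the principal stratum does not rule them out as irreducible components: a lower-dimensional closed subset can still be a component. To exclude them you need precisely the determinantal lower bound from (b) (every component of $N_C^0$ has codimension at most $d-rg+1$), compared against the explicit dimension count $r^2(g-1)+1-(d-rg+(r-1)(j-1))$ for the stratum with $\deg D=j-1$; this forces $j=1$ generically on every component when $r\geq 3$, and the borderline case $r=2$, $j=2$ must be handled separately (the paper does this by showing $\chi(F)\geq 2$ forces finitely many basepoints, contradicting the only way a codimension-$(d-rg+1)$ component can arise). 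You also need a density statement transferring irreducibility of the open piece back to all of $N_C(r,d)$: the surjection from $W^0_{r,d'}\times C$ hits bundles with $h^1(E)>0$ and with infinitely many basepoints, and degenerate members of your extension families need not have stable $Q$, so an argument deforming $Q$ (or $F$) to a stable bundle within an irreducible family and showing the resulting stable extensions specialize to the given $E$ is required. Finally, the ``generic-finiteness check'' you defer is itself a nontrivial induction (one must produce stable $E$ with $h^1(E)=0$ and only finitely many basepoints in every rank and degree in the range), not a routine verification.
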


The key ingredients in the proof of the above theorem are the results of Teixidor i Bigas and Russo \cite{teixidor-russo} and Narasimhan and Ramanan \cite{narasimhan-ramanan}. We also investigate the locus of stable bundles $E$ with finitely many basepoints and $h^1(E)=0$ in Proposition \ref{lemma:dense_openness_nc0_ncf}. This locus is comparatively well-behaved and has technical properties that simplify the proofs of some of the above results.

\subsection*{Structure of this paper} In Section \ref{sec:preliminaries} we briefly recall some basic facts about stable vector bundles on curves. Section \ref{sec:linebundles} is devoted to explaining the structure of the space of non--globally generated line bundles. In Section \ref{sec:bundles} we prove our main results concerning the locus $N_C(r,d)$ for bundles of rank at least $2$.

\subsection*{Acknowledgments} The authors would like to thank Montserrat Teixidor i Bigas, Izzet Coskun, Jack Huizenga, and John Lesieutre for their feedback and many valuable conversations. We are also grateful to the anonymous referees for their thoughtful feedback.

\section{Preliminaries}\label{sec:preliminaries}
In this section we collect some useful definitions and basic results about coherent sheaves on algebraic curves. We refer to \cite{huybrechts-lehn} for more background on stability and moduli spaces. We refer to the appendix in \cite{hoffman} for direct proofs of the necessary facts about stacks.

Let $C$ be a smooth projective curve of genus $g \geq 2$. We will denote by $K$ a fixed but arbitrary canonical divisor. Let $E$ be a torsion-free sheaf on $C$. We denote the \emph{slope} of $E$ by the number
\[
\mu(E) = \frac{\deg(E)}{\rk(E)}.
\]
We say $E$ is \emph{semistable} if $\mu(F) \leq \mu(E)$ for all proper subsheaves $F \subset E$, and we say $E$ is \emph{strictly semistable} if $\mu(F) = \mu(E)$ for some $F$. We say $E$ is \emph{stable} if strict inequality always holds.

Every semistable vector bundle $E$ admits a \emph{Jordan-H\"older} filtration
\[
0 = E_0 \subset E_1 \subset \cdots \subset E_n = E,
\]
where the $E_i$ are vector bundles and the quotients $E_i/E_{i+1}$ are stable. We write $\gr E = \bigoplus_i E_i/E_{i+1}$, and we say $E$ and $F$ are \emph{S-equivalent} if $\gr E \cong \gr F$.

We denote by $U_C(r,d)$ the moduli space parameterizing S-equivalence classes of semistable sheaves on $C$ with rank $r$ and degree $d$. The space $U_C(r,d)$ is irreducible of dimension $r^2(g-1)+1$.

A sheaf $E$ is called \emph{globally generated} if for all points $p \in C$, the natural map
\[
H^0(E) \to E_p
\]
is surjective. If $p$ is a point for which the above map is not surjective, we call $p$ a \emph{basepoint} of $E$. The following lemma is well-known and will be used implicitly throughout the paper.

\begin{lemma}
Suppose $E \in U_C(r,d)$. Then $p$ is a basepoint of $E$ if and only if $h^1(E(-p)) > h^1( E).$
\end{lemma}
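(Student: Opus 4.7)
The plan is to deduce the statement from the long exact sequence in cohomology associated with the short exact sequence
\[
0 \to E(-p) \to E \to E_p \to 0,
\]
where $E_p$ denotes the skyscraper sheaf at $p$ of length $r$ given by the fiber of $E$ at $p$. This sequence is just the tensor product of $E$ with the standard sequence $0 \to \O(-p) \to \O \to \O_p \to 0$; since $E$ is locally free, tensoring preserves exactness.

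First I would take global sections and write down the associated long exact cohomology sequence. Because $E_p$ is supported at a point, $H^1(E_p) = 0$, so the sequence reads
\[
0 \to H^0(E(-p)) \to H^0(E) \xrightarrow{\mathrm{ev}_p} E_p \to H^1(E(-p)) \to H^1(E) \to 0.
\]
From this I would read off two facts: the evaluation map $\mathrm{ev}_p \colon H^0(E) \to E_p$ is surjective if and only if the connecting homomorphism $E_p \to H^1(E(-p))$ is zero, and the latter holds if and only if the subsequent map $H^1(E(-p)) \to H^1(E)$ is an isomorphism.

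Next I would combine this with the definitions. By definition $p$ is a basepoint of $E$ precisely when $\mathrm{ev}_p$ fails to be surjective, i.e. the connecting map has nonzero image. Since $H^1(E(-p)) \to H^1(E)$ is already surjective, this is equivalent to saying its kernel is nonzero, which in turn is equivalent to $h^1(E(-p)) > h^1(E)$. Indeed, the exact sequence also gives the refined count
\[
h^1(E(-p)) - h^1(E) = r - \dim \im(\mathrm{ev}_p),
\]
so the non-globally-generated locus at $p$ is governed exactly by the jump in $h^1$ upon twisting by $-p$.

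There is no real obstacle here: the lemma is essentially just bookkeeping on a short exact sequence, and the only mild point to mention is that $E$ being locally free (it is a semistable vector bundle, hence in particular torsion-free, hence locally free on the smooth curve $C$) is what ensures the sequence $0 \to E(-p) \to E \to E_p \to 0$ is exact with $E_p$ a skyscraper of the expected length.
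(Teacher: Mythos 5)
Your proof is correct and is exactly the standard argument the paper leaves implicit (the lemma is stated there without proof as ``well-known''): twist the ideal-sheaf sequence of $p$ by the locally free sheaf $E$ and read off the long exact sequence, noting $H^1(E_p)=0$. The only point worth being careful about --- that $E \in U_C(r,d)$ is locally free because torsion-free sheaves on a smooth curve are locally free --- is one you address explicitly, so there is nothing to add.
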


We note that it is important to consider those bundles that are stable and discard those that are strictly semistable. Indeed, global generation is not always well-defined for S-equivalence classes as demonstrated by the following proposition.

\begin{prop}\label{propn:global_generation_S_equiv} 
Let $r = 2$, $d = 2(2g-1)$. Let $p$ be a point in $C$, $L$ a globally generated line bundle of degree $2g-1$, and $E$ a nontrivial extension of $L$ by $\O (K+p)$. Then $E$ is globally generated, semistable, and S-equivalent to a non-globally generated bundle.
\end{prop}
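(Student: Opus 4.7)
The plan is to extract each assertion from the defining short exact sequence
\[0 \to \O(K+p) \to E \to L \to 0\]
by cohomological computations and Serre duality.

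First, for semistability and S-equivalence: $E$ has slope $\mu(E) = 2g-1$, matching both $\O(K+p)$ and $L$, so any line subbundle $M \subset E$ either factors through $\O(K+p)$ or injects into $L$, and in either case $\deg M \leq 2g-1$. Thus $E$ is semistable but not stable, and the associated graded of its Jordan-H\"older filtration is $\gr E = \O(K+p) \oplus L$. A Riemann-Roch calculation shows that $h^0(\O(K+p)) = g = h^0(\O(K))$, so $p$ is a basepoint of $\O(K+p)$; hence $\gr E$ is not globally generated at $p$.

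Next I need to verify that $E$ itself is globally generated. The long exact sequence of the extension, together with $h^1(\O(K+p)) = 0 = h^1(L)$, yields $h^0(E) = 2g$, $h^1(E) = 0$, and surjectivity of $H^0(E) \to H^0(L)$. Combined with the global generation of $L$ and the fact that $p$ is the unique basepoint of $\O(K+p)$, a standard lift-and-adjust argument then gives global generation of $E$ at every $q \neq p$.

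The main obstacle is verifying global generation at the single point $p$; this is where the nontriviality of the extension must enter. I reduce the claim to $h^1(E(-p)) = 0$, equivalently $h^0(E(-p)) = 2g-2$. Twisting the extension by $\O(-p)$ gives $0 \to \O(K) \to E(-p) \to L(-p) \to 0$, and I analyze the connecting map $\partial\colon H^0(L(-p)) \to H^1(\O(K)) \cong k$. The hypothesis that $L$ is globally generated forces $L \neq \O(K+p)$, so $L^{-1}(K+p)$ is a nontrivial degree-zero line bundle, giving $h^1(L(-p)) = 0$ and $h^0(L(-p)) = g-1$. Via the Serre-duality identification $\Ext^1(L, \O(K+p)) \cong H^0(L(-p))^*$, the map $\partial$ is cup product with the extension class $\xi$; since the extension is nontrivial, $\xi \neq 0$ as a functional on $H^0(L(-p))$, so $\partial$ surjects onto the one-dimensional $H^1(\O(K))$. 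This forces $h^1(E(-p)) = 0$, completing the verification of global generation.
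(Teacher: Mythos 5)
Your proposal is correct and follows essentially the same route as the paper: semistability via the slope comparison with the sub and quotient line bundles, S-equivalence to $\O(K+p)\oplus L$ (not globally generated at $p$), and global generation at $p$ reduced to the surjectivity of the connecting map $H^0(L(-p))\to H^1(\O(K))$, identified via Serre duality as pairing against the nonzero extension class. Your explicit check that $h^1(L(-p))=0$ (because global generation of $L$ forces $L\not\cong\O(K+p)$) is a welcome detail that the paper uses only implicitly.
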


\begin{proof} The bundle $E$ fits into an exact sequence
\begin{equation}\label{eqn:sequivalence_gg}
   0 \to \O(K+p) \to E \to L \to 0 
\end{equation}
Let $F \subset E$ be a subsheaf. Then there is a map $F \to L$ whose image either has degree at most $2g-1$ (because $L$ is irreducible) or is zero, in which case $F \to L$ factors through $\O(K+p)$ and the analogous fact applies. Thus $E$ is semistable. The bundles $E$ and $\O(K+p) \oplus L$ are visibly S-equivalent and $\O(K + p) \oplus L$ is not globally generated. 

To show $E$ is globally generated we first note that if $E$ has a basepoint then it is at $p$. The end of the long exact sequence in cohomology applied to (\ref{eqn:sequivalence_gg}) after twisting by $\O(-p)$ is the following:
\[
\cdots \to H^0( L(-p)) \to H^1( \O(K)) \to H^1( E(-p)) \to 0.
\]
Since $h^1( \O(K)) = 1$, we see that $E$ will be globally generated if and only if the map $H^0(L(-p)) \to H^1( \O(K))$ is nonzero. Applying $\Hom(L, -)$ to (\ref{eqn:sequivalence_gg}) instead, we have:
\[
0 \to \Hom(L, \O(K+p)) \to \Hom(L, E) \to \Hom(L,L) \to \Ext^1(L, \O(K+p)) \to \cdots
\]
Note that $\Ext^1(L, \O(K+p)) = H^1( L^\ast(K+p))$. The image of $1_L$ under the map $\Hom(L,L) \to H^1( L^\ast(K+p))$ determines the map $H^0( L(-p)) \to H^1( \O(K))$ via the pairing
\[
H^1( L^\ast(K+p)) \times H^0(L(-p)) \to H^1( \O(K))
\]
from Serre duality. The image of $1_L$ in $\Ext^1(L, \O(K+p))$ is nonzero if (\ref{eqn:sequivalence_gg}) is assumed to be a nontrivial extension. Thus the map $H^0(L(-p)) \to H^1( \O(K))$ is nonzero and therefore $E$ is globally generated.
\end{proof}
Let $r\geq 2$ and $d$ be integers. We define
\begin{definition}
\[
N_C(r,d) = \{E : E \text{ is stable and not globally generated} \} \subset U_C(r,d).
\]
\end{definition}

\section{Line bundles with basepoints}\label{sec:linebundles}
We describe here the locus of line bundles that fail to be globally generated. The main idea is that the dimension of this locus decreases with the degree. If $d \leq g$, then the general line bundle of degree $d$ is not globally generated, and if $d > 2g-1$, then every line bundle is globally generated. Consequently, we are interested in the range $g+1 \leq d \leq 2g-1$. We will see that when $d=g+1$, the locus of line bundles in $\pic^d(C)$ with basepoint is a divisor. When $d=g+2$, it has codimension 2, and so on. The method of proof is to give an explicit description of line bundles that fail to be globally generated.

\begin{lemma}\label{lemma:linebundles_with_bp}
Let $D$ be a divisor of degree $d=g+j$ for $1 \leq j \leq g-1$. If $\O(D)$ has a basepoint at $p \in C$, then $D$ is linearly equivalent to $K+p - q_1 - \cdots -q_{g-j-1}$ for some $\{q_i\} \subset C$ satisfying $q_i \neq p$ for all $i$.
\end{lemma}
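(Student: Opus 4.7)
The plan is to translate the basepoint condition for $\O(D)$ into an existence statement for an effective divisor in $|K - D + p|$ via Serre duality, and then show that this effective representative can be chosen to avoid the point $p$ itself.

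First, I would observe that $p$ is a basepoint of $\O(D)$ exactly when $h^0(\O(D-p)) = h^0(\O(D))$. Applying Riemann--Roch to both $D$ and $D - p$, this equality is equivalent to $h^1(\O(D-p)) = h^1(\O(D)) + 1$. By Serre duality, $h^1(\O(D-p)) = h^0(\O(K - D + p))$ and $h^1(\O(D)) = h^0(\O(K-D))$, so the basepoint hypothesis rewrites as
\[
h^0(\O(K - D + p)) = h^0(\O(K - D)) + 1 \geq 1.
\]
In particular $|K - D + p|$ is nonempty. Since $\deg(K - D + p) = 2g - 2 - (g + j) + 1 = g - j - 1$, any effective divisor in this system has the form $q_1 + \cdots + q_{g-j-1}$, yielding $D \sim K + p - q_1 - \cdots - q_{g-j-1}$.

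The remaining step, and the main obstacle, is to arrange that the $q_i$ are distinct from $p$; equivalently, that $p$ is not a basepoint of the linear system $|K - D + p|$. If $p$ were a basepoint of $|K - D + p|$, then every effective divisor in the system would contain $p$, forcing $h^0(\O(K - D + p)) = h^0(\O(K - D))$. But this contradicts the strict inequality $h^0(\O(K - D + p)) = h^0(\O(K - D)) + 1$ derived above from the assumption that $p$ is a basepoint of $\O(D)$. Hence some effective divisor $q_1 + \cdots + q_{g-j-1} \sim K - D + p$ has $q_i \neq p$ for every $i$, as required.

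In essence, hypothesis and conclusion are linked by a perfect duality: $p$ is a basepoint of $|D|$ if and only if $p$ is \emph{not} a basepoint of $|K - D + p|$, which drops out of Riemann--Roch together with Serre duality. Once this observation is in hand the proof is a one-line bookkeeping exercise, so the only real content is recognizing the complementary roles played by the two linear systems.
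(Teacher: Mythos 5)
Your proof is correct and follows essentially the same route as the paper's: both use Serre duality to convert the basepoint condition $h^1(\O(D-p)) = h^1(\O(D))+1$ into the existence of a section of $\O(K-D+p)$ (equivalently, a morphism $\O(D-K-p)\to\O$) not vanishing at $p$, whose zero divisor supplies the $q_i$. The paper phrases the last step via the cokernel of $\Hom(\O(D-K-p),\O(-p))\to\Hom(\O(D-K-p),\O)$ rather than via basepoints of $|K-D+p|$, but this is the same observation.
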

\begin{proof}
Since $\O(D)$ has a basepoint at $p$, we have $h^1(\O(D-p)) = h^1(\O(D)) + 1$. Equivalently, the map
\[
\Hom(\O(D-K-p),\O(-p)) \to \Hom(\O(D-K-p), \O)
\]
has a 1-dimensional cokernel. Thus there is a nonzero morphism $\O(D-K-p) \to \O$ whose image is not contained in the ideal sheaf $\O(-p)$. Thus $\O(D-K-p)$ is isomorphic to an ideal sheaf of a divisor $D' \subset C$ whose support does not contain $p$. Writing $D' = \sum q_i$ gives the result.
\end{proof}

The lemma shows that every line bundle $L$ with a basepoint at $p$ is of the form
\[
L \cong \O(K+p - q_1 - q_2 - \cdots - q_{g-j-1}).
\]
By varying the points $p$, $q_1, \dots, q_{g-j-1}$, we obtain a $(g-j)$-dimensional family of line bundles with basepoint. \emph{Prima facie}, it may occur that many of these bundles are isomorphic and that the locus cut out in $\pic^d(C)$ does not have dimension $g-j$. The next proposition shows that the generic situation is that a given line bundle in this family is isomorphic to only finitely many others in the family, and therefore that the locus does have the expected dimension.

\begin{prop}
Suppose $1 \leq j \leq g-1$. Then the dimension of the locus of line bundles of degree $d=g+j$ having a basepoint equals $g-j$.
\end{prop}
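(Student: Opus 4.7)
The plan is to analyze the map
\[
\alpha\colon C \times C^{(g-j-1)} \longrightarrow \pic^d(C), \qquad (p, D) \longmapsto \O(K + p - D).
\]
By Lemma \ref{lemma:linebundles_with_bp}, the image of $\alpha$ contains the locus $B \subseteq \pic^d(C)$ of non--globally generated line bundles, so $\dim B \leq \dim(C \times C^{(g-j-1)}) = g - j$. The work is to establish the reverse inequality by exhibiting a point of the image over which $\alpha$ has a zero-dimensional fiber.

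Fix a generic $p_0 \in C$ and a generic effective divisor $D_0$ of degree $g - j - 1$ with $p_0 \notin \operatorname{supp}(D_0)$, and set $L_0 = \O(K + p_0 - D_0)$. Since $g - j - 1 \leq g - 1$, standard facts give $h^0(\O(D_0)) = 1$ and $h^0(\O(D_0 - p_0)) = 0$ for generic choices; applying Riemann--Roch together with Serre duality one obtains $h^0(L_0) = j + 1$ and, crucially, $h^1(L_0) = 0$, and a direct check then confirms that $p_0$ is a basepoint of $L_0$.

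Next I would describe the fiber. A pair $(p, D)$ lies in $\alpha^{-1}(L_0)$ precisely when $p$ is a basepoint of $L_0$ and $D \in |K + p - L_0|$. Because $h^1(L_0) = 0$, the basepoint condition forces $h^0(\O(K + p - L_0)) = 1$, so $D$ is uniquely determined by $p$; it therefore suffices to show $L_0$ has only finitely many basepoints. Rewriting $K + p - L_0 \sim p + D_0 - p_0$, a point $p$ is a basepoint of $L_0$ exactly when the complete linear system $|p + D_0|$ contains a divisor passing through $p_0$. For $p$ in an open dense subset of $C$ one has $h^0(\O(p + D_0)) = 1$, so the unique divisor in the linear system is $p + D_0$ itself, and the condition $p_0 \in \operatorname{supp}(p + D_0)$ then forces $p = p_0$ by the genericity of $p_0$.

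The main obstacle is controlling the residual closed set $\Sigma = \{p \in C : h^0(\O(p + D_0)) \geq 2\}$, on which extra basepoints could conceivably form a positive-dimensional family. A standard dimension count handles this: the locus of effective divisors in $C^{(g-j)}$ with $h^0 \geq 2$ has positive codimension (since $g - j \leq g - 1$), so by semicontinuity the fiber $\{p \in C : p + D_0 \text{ lies in this locus}\}$ is a proper closed --- hence finite --- subset of $C$ for generic $D_0$. Combining these points, $L_0$ has only finitely many basepoints, $\alpha^{-1}(L_0)$ is finite, and consequently $\dim B = g - j$.
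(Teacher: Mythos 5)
Your construction is the same as the paper's: the map $(p,D)\mapsto \O(K+p-D)$ from $C\times C^{(g-j-1)}$, the upper bound from Lemma \ref{lemma:linebundles_with_bp}, and the lower bound from a finite fiber over a well-chosen $L_0$ with $h^1(L_0)=0$. Your analysis of the fiber is correct and in fact more careful than the paper's: the identification of $\alpha^{-1}(L_0)$ with the basepoints of $L_0$ (using $h^1(L_0)=0$ to get $h^0(K+p-L_0)=1$), and the explicit handling of the residual locus $\Sigma=\{p: h^0(p+D_0)\geq 2\}$, are both sound.

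There is, however, one gap in the final inference. From the finiteness of $\alpha^{-1}(L_0)$ and upper semicontinuity of fiber dimension you may conclude that $\alpha$ is generically finite, hence that $\operatorname{im}\alpha$ has dimension $g-j$. But the lemma only gives the containment $B\subseteq \operatorname{im}\alpha$, so this yields $\dim B\leq g-j$ again, not the lower bound; knowing that the single point $L_0$ lies in $B$ does not make $B$ full-dimensional in the image. What is missing is the converse of Lemma \ref{lemma:linebundles_with_bp}: for every pair with $p\notin\operatorname{supp}(D)$ the bundle $M=\O(K+p-D)$ actually has a basepoint at $p$. This is a short check --- $h^1(M(-p))=h^0(\O(D))\geq 1$ while $h^1(M)=h^0(\O(D-p))<h^0(\O(D))$ since the effective divisor $D\in|D|$ misses $p$ --- and it shows that $\alpha$ maps the dense open set $\{p\notin\operatorname{supp}(D)\}$ into $B$. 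Combined with your generic finiteness this gives a $(g-j)$-dimensional constructible subset of $B$ and closes the argument. The paper avoids this issue by verifying directly that each member of its explicit $(g-j)$-dimensional family has a basepoint before computing its fiber.
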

\begin{proof}
Let
\[
V =\{(q_1 + \cdots + q_{g-j-1}, p) : p \neq q_i \text{ for all } i\}\subset C^{(g-j-1)} \times C.
\]
Define a map $\Phi:V \to \pic^{g+j}(C)$ by
\[
\Phi(q_1+\cdots+q_{g-j-1},p) = \O(K+p-q_1 - \cdots - q_{g-j-1}).
\]

Lemma \ref{lemma:linebundles_with_bp} shows that $\Phi$ surjects onto the locus of non-globally generated line bundles. We will show that $\Phi$ is generically finite and it will follow that the dimension of the locus of degree $d$ line bundles with a basepoint is bounded above by $\dim V = g-j$. To conclude the proof, we will exhibit a $(g-j)$-dimensional family of line bundles with basepoint.

Given $(q_1 + \cdots + q_{g-j-1},p) \in V$ and $p' \in C$, we claim that $h^0(q_1 + q_2 + \cdots + q_{g-j-1} + p' -p) > 0$ is equivalent to the existence of points $q'_1, \cdots , q'_{g-j-1}$ in $C$ such that $(q'_1 + \cdots + q'_{g-j-1}, p') \in V$ and the corresponding line bundles $\Phi(q_1+ \cdots + q_{g-j-1},p)$ and $\Phi(q'_1+ \cdots + q'_{g-j-1}, p')$ are isomorphic. 

 Indeed, $K+p-\sum q_i$ is linearly equivalent to $K+p'-\sum q_i'$ if and only if
\[
q_1 + \cdots + q_{g-j-1} + p' - p \sim q_1' + \cdots + q_{g-j-1}',
\]
and the right-hand side is an effective divisor. If the $q_i$'s and $p'$ are general, then we know from \cite[Lemma IV.1.7]{acgh} (with $d = g-j$ and $r = 0$ in the notation in \emph{loc. cit.}) that $h^0(q_1 + \cdots + q_{g - j -1} + p') = 1$. Moreover, if  $p$ is not a basepoint of the (effective) divisor $q_1 + \cdots + q_{g-j-1} + p'$, then we have  $h^0(q_1 + \cdots + q_{g-j-1} + p'-p) = 0$. Thus the general fiber of $\Phi$ is finite.

To conclude the proof, we exhibit a $(g-j)$-dimensional family of degree-$d$ line bundles with basepoints. Let $K = q_1 + \cdots + q_{g-j-1} + D'$ be a canonical divisor with $D'$ effective. Let $p \in C$ be a point different from the $q_i$ and not in the support of $D'$. Define $D = K+p-\sum q_i$. Then we show the following:
\begin{enumerate}[(i)]
	\item $h^1(D) = 0$,
	\item $h^1(D-p) = 1$, i.e., $D$ has a basepoint at $p$.
\end{enumerate}
Indeed $h^1(D) = h^0(K-D'-p)$. But there is a unique canonical section containing $D'$ and it does not contain $p$ by construction. Thus $h^1(D) = 0$. On the other hand, $h^1(D-p) = h^0(K-D') = 1$.

The fiber of $\Phi$ over a divisor $D=K - \sum q_i + p$ constructed in this manner consists of a single point. Indeed, if $D \sim K-\sum q'_i + p'$, then $\sum q_i + p' -p$ has a section. By Serre duality, this is occurs precisely if $h^1(D+p') > 0$. But we have seen that $h^1(D) = 0$, so this is impossible.
\end{proof}

\section{Higher rank bundles}\label{sec:bundles}
In this section we prove the main results about the dimension of the locus $N_C(r,d)$ of stable bundles with basepoints. We work throughout with a smooth projective curve $C$ of genus $g \geq 2$. We begin with a few facts which show that we may restrict our attention to ranks $r$ and degrees $d$ satisfying $rg+1 \leq d \leq r(2g-1)-1$, because this is the range for which the general---but not every---stable bundle is globally generated. The first fact is a well-known result describing the cohomology of a general stable bundle.

\begin{lemma}[\protect{\cite{laumon}\cite{sundaram}\cite{ballico-ramella}}]\label{lemma:gen_bundle_one_nonzero_coh}
Let $C$ be a smooth curve and $E$ a general stable vector bundle on $C$. Then $E$ has at most one nonzero cohomology group.
\end{lemma}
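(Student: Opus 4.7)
The plan is to use Riemann--Roch and Serre duality to collapse the assertion into a single vanishing statement, and then to deduce that vanishing from Sundaram's Brill--Noether bound (already quoted in the introduction) via semicontinuity on the irreducible moduli space $U_C(r,d)$.

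First I would note that $\chi(E) = h^0(E) - h^1(E) = d - r(g-1)$. If $\chi(E) > 0$ the lemma amounts to $h^1(E) = 0$ for general stable $E$; if $\chi(E) < 0$ it amounts to $h^0(E) = 0$; and if $\chi(E) = 0$ both assertions coincide. The assignment $E \mapsto E^{*}\otimes \O(K)$ induces an isomorphism $U_C(r,d) \to U_C(r,r(2g-2)-d)$ carrying general point to general point, and by Serre duality it interchanges $h^0$ and $h^1$. Thus it suffices to treat the range $d \leq r(g-1)$ and show $h^0(E) = 0$ for a general stable $E$.

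Next, the case $d \leq 0$ is immediate from stability: any nonzero section of $E$ would give $\O \hookrightarrow E$ with $\mu(\O) = 0 \geq \mu(E)$, forbidden when $E$ is stable of rank $r \geq 1$ (the rank--one case $r=1$, $d=0$ is handled by taking $E \not\cong \O$). So assume $1 \leq d \leq r(g-1)$. Since $U_C(r,d)$ is irreducible and $W^0_{r,d} = \{E : h^0(E) \geq 1\}$ is closed by semicontinuity, the task reduces to showing $W^0_{r,d}$ is a proper closed subset. Sundaram's theorem (as recalled in the introduction) gives every component of $W^0_{r,d}$ dimension at most $\dim U_C(r,d) - (rg - r - d + 1)$, which is strictly less than $\dim U_C(r,d)$ throughout the range $1 \leq d \leq r(g-1)$. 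Hence a general stable $E$ of degree $d \leq r(g-1)$ has $h^0(E)=0$, and by the duality step the analogous statement $h^1(E)=0$ holds for a general stable $E$ with $d \geq r(g-1)$.

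The only non-trivial step is Sundaram's dimension estimate for $W^0_{r,d}$; everything else is duality, semicontinuity, and an irreducibility input for $U_C(r,d)$ that is already standard at this point in the paper. A mild technical point worth checking is that the correspondence $E \mapsto E^{*}\otimes \O(K)$ is indeed an isomorphism of moduli spaces sending the complement of the Brill--Noether locus on one side to the complement on the other; this is immediate from the fact that it preserves stability and is involutive up to a shift of invariants.
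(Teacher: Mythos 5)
Your argument is correct in substance, but note that the paper does not prove this lemma at all: it is quoted from the literature (Laumon, Sundaram, Ballico--Ramella), so there is no internal proof to compare against. What you have done is show that the lemma is actually a formal consequence of the one piece of that literature the paper already invokes explicitly, namely Sundaram's dimension bound for $W^0_{r,d}$. The reduction is clean: Serre duality via the involution $E \mapsto E^\ast \otimes \O(K)$, which preserves stability and exchanges $h^0$ and $h^1$, legitimately reduces everything to showing $h^0(E)=0$ for general stable $E$ with $d \leq r(g-1)$; the case $d \leq 0$ follows from stability as you say; and for $1 \leq d \leq r(g-1)$ the codimension $rg-r-d+1$ is positive, so $W^0_{r,d}$ is proper and the irreducibility of $U_C(r,d)$ finishes the job. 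This is not circular, since Sundaram's Theorem II.3.1 is a strictly stronger statement about Brill--Noether loci. The references themselves tend to argue more directly (exhibit one bundle, e.g.\ a sum of general line bundles, with at most one nonzero cohomology group, then use semicontinuity and the irreducibility of the relevant parameter space together with density of stable bundles); that route is more elementary in that it avoids the full dimension count, while yours is shorter given that the paper has already put Sundaram's theorem on the table.

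Two small points to tidy. First, the paper only quotes Sundaram's bound in the range $1 \leq d' \leq r(g-1)-1$; your argument also needs the boundary case $d = r(g-1)$ (equivalently $\chi(E)=0$), so you should either confirm that Sundaram's theorem covers it or replace it there by the classical fact that the generalized theta divisor is a proper divisor in $U_C(r,r(g-1))$. Second, the closedness of $\{E : h^0(E) \geq 1\}$ in $U_C(r,d)$ requires a word when $\gcd(r,d) \neq 1$, since there is no Poincar\'e family on the moduli space itself; one passes to an \'etale cover carrying such a family, exactly as in the proof of Proposition \ref{prop:determinantal_description}. Neither point affects the validity of the approach.
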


\begin{prop}\label{prop:easy_vb_facts}
Let $C$ be a smooth curve of genus $g \geq 2$. Let $r$ and $d$ be integers with $r \geq 2$. We have the following:
\begin{enumerate}[(a)]
	\item If $d \leq rg$, then the general $E \in U_C(r,d)$ is not globally generated.
    \item If $d > r(g-1
    )$, then $h^1(E)= 0$ for a general $E \in U_C(r,d)$.
	\item If $d > r(2g-1)$, then $E$ is globally generated for all $E \in U_C(r,d)$.
	\item If $d = r(2g-1)$ and $E$ is stable, then $E$ is globally generated.
	\item If $rg +1 \leq d \leq r(2g-1)-1$, then the general $E\in U_C(r,d)$ is globally generated. 
\end{enumerate}
\end{prop}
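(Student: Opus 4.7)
The plan is to handle the five parts in turn using Riemann--Roch, Serre duality, and the slope constraints coming from (semi)stability, together with the already-stated Lemma \ref{lemma:gen_bundle_one_nonzero_coh} and the Sundaram-based dimension bound recalled in the introduction.

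For (b), Riemann--Roch gives $\chi(E)=d-r(g-1)>0$, and Lemma \ref{lemma:gen_bundle_one_nonzero_coh} forces $h^1(E)=0$ for the general stable $E$. For (a), when $d\le r(g-1)$ the general $E$ has $\chi\le 0$ and at most one nonzero cohomology, so $h^0(E)=0<r$ and $E$ is not globally generated; when $r(g-1)<d\le rg$ one gets $h^0(E)=\chi(E)\le r$, and if $h^0(E)<r$ we are done immediately, while if $h^0(E)=r$ (forcing $d=rg$) any surjection $\O^{r}\twoheadrightarrow E$ of rank-$r$ locally free sheaves on a smooth curve would have to be an isomorphism (the kernel is torsion free of rank $0$), contradicting $d=rg>0$.

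For (c) and (d) the key identity is Serre duality: $p$ is a basepoint of $E$ iff $h^0(E^\ast(K+p))>0$, and $\mu(E^\ast(K+p)) = 2g-1-\mu(E)$. In case (c) this slope is strictly negative, and semistability of $E^\ast(K+p)$ rules out any sub-line-bundle of non-negative degree, in particular the saturation of the image of a hypothetical nonzero section. In case (d) the slope is exactly $0$, so semistability alone is insufficient; here I would use strict stability of $E$ (hence of $E^\ast(K+p)$) together with $r\ge 2$, observing that a nonzero section would produce a sub-line-bundle of degree $\ge 0$ in a rank-$r$ stable bundle of slope $0$, violating strict stability.

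Finally, (e) is a direct consequence of the dimension bound recalled in the introduction: $N_C(r,d)$ has dimension at most $r^2(g-1)+1-(d-rg)$, which is strictly less than $\dim U_C(r,d)$ precisely when $d\ge rg+1$, so the general stable bundle is globally generated. The main technical subtlety I expect is in (d), where semistability alone is insufficient and one must pass from a section $\O\to E^\ast(K+p)$ to a saturated sub-line-bundle in order to invoke strict stability; a milder subtlety also appears at the boundary case $h^0(E)=r$ in (a), which is resolved by the rank-plus-degree argument above.
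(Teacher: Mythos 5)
Your proposal is correct and follows essentially the same route as the paper: Riemann--Roch plus the ``at most one nonzero cohomology group'' lemma for (a) and (b), Serre duality plus (semi)stability of $E^\ast(K+p)$ for (c) and (d), and the Sundaram dimension bound for (e). The one place you go beyond the paper is the boundary case $h^0(E)=r$ (i.e.\ $d=rg$) in (a), where the paper simply says $E$ ``does not have enough sections'' while you correctly observe that a surjection $\O^{r}\twoheadrightarrow E$ between rank-$r$ bundles would be an isomorphism, contradicting $\deg E=rg>0$; this is a worthwhile clarification but not a different argument.
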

\begin{proof}
Statement (a) is clear because if $d \leq rg$, then $\chi(E) \leq r$ by Riemann-Roch. By Lemma \ref{lemma:gen_bundle_one_nonzero_coh}, we have $h^0(E) \leq r$ and therefore $E$ does not have enough sections to be globally generated.

For statement (b), we have $h^0(E) - h^1(E) = d + r(1-g) >0$. By Lemma \ref{lemma:gen_bundle_one_nonzero_coh}, it must be that $h^1(E) = 0$

For $E$ to fail to be globally generated, the map $H^0(E) \to E_p$ must fail to be surjective, so we must have $0 < h^1(E(-p)) = h^0( E^\ast(K+p))$. But if $\deg E^\ast(K+p) = -d+r(2g-1) < 0$ and $E^\ast(K+p)$ is semistable, then it cannot have any sections. This gives (c).

Statement (d) is much like statement (c): we have $0 < h^1( E(-p)) = \hom(E, \O(K+p))$. Since both $E$ and $\O(K+p)$ are stable of same slope, there can be no nonzero maps between them unless they are isomorphic, which they are not.

For statement (e), we note that the dimension of the locus of non-globally generated bundles in $U_C(r,d)$ is bounded above by $r^2(g-1) + 1 - (d - rg)$ (see Equation (\ref{eqn:expected_dimension})), which is strictly smaller than the dimension of $U_C(r,d)$ when $ rg+1 \leq d \leq r(2g-1)-1$.
\end{proof}

\subsection{Higher rank bundles with basepoints}
We are now able to compute the dimension of $N_C(r,d)$. The following three facts are the primary objectives of this subsection.
\begin{itemize}
    \item Let $N_C^0(r,d) \subset N_C(r,d)$ denote the set of stable, non--globally generated $E$ with $h^1(E) = 0$. Then every component of $N_C^0(r,d)$ has the expected codimension or consists of vector bundles that are nowhere globally generated, in which case the component has codimension at most $d-rg+1$ (Prop. \ref{prop:determinantal_description}.)
    \item For each $d$ in the range $rg+1 \leq d \leq r(2g-1)-1$, there is a family of non--globally generated vector bundles with the expected dimension (Cor. \ref{cor:family_of_exp_dim}.)
    \item Any stable $E \in U_C(r,d)$ with $rg+1 \leq d \leq r(2g-1)-1$ and a basepoint at $p$ is an extension of the form
    \[
    0 \to F \to E \to \O(K+p-D) \to 0,
    \]
    with $D$ effective (Prop. \ref{prop:upperbound_vb}.)
\end{itemize}

Proposition \ref{thm:existence_stable_seq} constructs an element of $N_C^0(r,d)$. In fact, it produces a family of stable bundles of the form
\[
0 \to F \to E \to \O(K+p-D) \to 0,
\]
where $F$ is a generic stable bundle of rank $r-1$ and $D$ is an effective divisor. The main content of the proof lies in showing that a generic such $E$ is stable. To do so, we follow the method of \cite{teixidor-russo}, which was originally used to show that generic extensions of the form
\[
0 \to E' \to E \to E'' \to 0
\]
are stable whenever $E'$ and $E''$ are generic stable bundles and $\mu(E') < \mu(E'')$. We can use the result of \cite{teixidor-russo} to produce stable bundles with basepoints as follows. Let $D$ be an effective divisor on $C$ whose degree satisfies $2g-1-\deg D > d/r$. Let $L$ be a generic line bundle of degree $2g-1-\deg D$ and $F$ a generic stable bundle of rank $r-1$ and degree $d-(2g-1-\deg D)$. Then \cite{teixidor-russo} says that a generic extension
\[
0 \to F \to E \to L \to 0
\]
is stable. Twisting by $L^\ast(K+p-D)$, we obtain a stable vector bundle $E \otimes L(K+p-D)$ exhibited as an extension
\[
0 \to F\otimes L(K+p-D) \to E\otimes L(K+p-D) \to \O(K+p-D) \to 0.
\]
The bundle $E \otimes L(K+p-D)$ has a basepoint at $p$ as long as $p$ is not in the support of $D$. A slightly stronger statement is available: the dense set from which $F$ is chosen need not depend on $p$ and $D$. The proof of this stronger fact is essentially what appears in \cite[Prop. 1.11]{teixidor-russo} and \cite[Thm. 0.1]{teixidor-russo}, with some extra care taken in tracking the dependence on $p$ and $D$. We include a complete proof below for the reader's convenience.

\begin{prop}\label{thm:existence_stable_seq}Suppose $g \geq 2$, $r \geq 2$, and $rg+1 \leq d \leq r(2g-1)-1$. Let $l$ be a nonnegative integer such that $2g-1-l > d/r$. Then there exists a dense collection of stable vector bundles $F$ of rank $r-1$ and degree $d-(2g-1-l)$ such that for any effective divisor $D$ of degree $l$ and any point $p \in C$ not in the support of $D$, a generic extension  
\[ 0 \to F \to E \to \O(K+p-D) \to 0, \]
yields a stable bundle $E$ of rank $r$ and degree $d$ with a basepoint at $p$.
\end{prop}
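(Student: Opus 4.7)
The plan is to adapt the argument of Teixidor i Bigas and Russo \cite{teixidor-russo}, being careful to ensure that the dense set of $F$'s can be chosen uniformly in the parameters $(p, D)$. First I would verify the slope inequalities: with $L := \O(K+p-D)$ of slope $\gamma = 2g-1-l$ and $F$ of slope $\beta = \frac{d-(2g-1-l)}{r-1}$, the hypothesis $\gamma > d/r$ rearranges to $\beta < d/r < \gamma$, which is the needed configuration. The basepoint condition is the easy part: since $p \notin \mathrm{supp}(D)$, the canonical section of $\O(D)$ does not vanish at $p$, and a direct Riemann-Roch / Serre-duality computation gives that $p$ is a basepoint of $L$. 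Because $E \twoheadrightarrow L$ induces a surjection $E_p \twoheadrightarrow L_p$ and the square
\[
\begin{array}{ccc}
H^0(E) & \to & E_p \\
\downarrow & & \downarrow \\
H^0(L) & \to & L_p
\end{array}
\]
commutes, the failure of $H^0(L) \to L_p$ to be surjective forces the failure of $H^0(E) \to E_p$ to be surjective, so $p$ is a basepoint of any extension $E$.

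For stability, I would suppose $G \subsetneq E$ is a saturated (hence locally free) subsheaf with $\mu(G) \geq \mu(E)$ and set $G' = G \cap F$ and $G'' = \im(G \to L)$, producing a commutative diagram with short exact rows. The case $G \subset F$ is immediately ruled out by $\mu(G) \leq \mu(F) < \mu(E)$, so $G''$ is a nonzero line subsheaf $\O(K+p-D-D'')$ of $L$ for some effective divisor $D''$, and $G'$ has rank $s-1$ where $s = \rk G$. The existence of such a destabilizing $G$ is equivalent to the extension class of $E$ in $\Ext^1(L, F)$ lying in the kernel of the natural restriction-pushforward map $\Ext^1(L, F) \to \Ext^1(G'', F/G')$.

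The core of the proof is a Teixidor-Russo style dimension count: stratifying by the discrete data $(s, \deg G', \deg D'')$, one shows that for $F$ in a dense open $U \subset U_C(r-1, d-(2g-1-l))$, the union of the kernels $\ker(\Ext^1(L,F) \to \Ext^1(G'', F/G'))$ taken over all admissible $(G', G'')$ of every numerical type has dimension strictly smaller than $\dim \Ext^1(L, F)$. This argument uses the stability of $F$ (to bound $\deg G'$ from above), Lemma \ref{lemma:gen_bundle_one_nonzero_coh} (to pin down the Hom and Ext dimensions for generic $F$), and the slope inequalities coming from $rg+1 \leq d \leq r(2g-1)-1$ together with $\gamma > d/r$. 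This is exactly the kind of inequality established in \cite[Prop. 1.11]{teixidor-russo}, adapted to our setting.

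The main obstacle, and the essential point beyond directly citing \cite{teixidor-russo}, is the uniformity in $(p, D)$. The observation that makes this work is that every dimension appearing in the Teixidor-Russo count --- $\dim \Ext^1(L, F)$, $\dim \Ext^1(G'', F/G')$, $\hom(G'', F/G')$, and so on --- depends on $L$ and $G''$ only through their degrees, while $\deg L = 2g-1-l$ is fixed and $\deg G''$ ranges over a finite set determined by $l$. Consequently, the condition on $F$ cut out by the Teixidor-Russo inequality depends only on these fixed numerical data and is the same for every allowed $(p, D)$, so the dense open $U$ it defines works simultaneously for all $(p, D)$ with $p \notin \mathrm{supp}(D)$. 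A generic extension over any such $(p, D)$ then yields a stable $E$ of rank $r$ and degree $d$ with a basepoint at $p$.
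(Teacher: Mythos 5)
Your skeleton is sound in several places: the commuting-square argument correctly shows that a basepoint of the quotient $L=\O(K+p-D)$ is a basepoint of $E$, the reduction of destabilization to the vanishing of the extension class in $\Ext^1(G'',F/G')$ is the standard (correct) criterion, and isolating uniformity in $(p,D)$ as the real issue is exactly right. The problem is that the decisive step --- ``the union of the kernels has dimension strictly smaller than $\dim\Ext^1(L,F)$'' --- is asserted rather than proved, and the tools you invoke would not carry it. First, \cite[Prop.~1.11]{teixidor-russo} only treats the nearly-balanced case: in the paper's proof it is applied only after the quotient has been cut down to degree $\lceil d/r\rceil$ (or $d/r+1$), where $\mu(F^\ast)-\mu(E^\ast)\leq 1/(r-1)$ forces any destabilizer of $E^\ast$ to be a one-point elementary modification of $F^\ast$, making the count trivial. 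For $\deg L=2g-1-l$ the gap $\deg L-d/r$ can be nearly $g-1$, which is outside the range of that proposition; this is precisely why Teixidor i Bigas and Russo need elementary modifications at all. Second, ``stability of $F$'' only gives $\mu(G')<\mu(F)$, which leaves the strata with $\rk G\geq 2$ completely uncontrolled: with that bound alone, destabilizers of every rank are numerically admissible and the families of subsheaves $G'\subset F$ realizing them can be large. The missing ingredient is Lange's theorem \cite[Satz~2.2]{lange}, $\mu(G')\leq\mu(F)-\bigl(1-\tfrac{\rk G'}{r-1}\bigr)(g-1)$ for the generic stable $F$; this, not the cohomological genericity of Lemma~\ref{lemma:gen_bundle_one_nonzero_coh}, is the genericity condition that closes the count, and it is also the $(p,D)$-independent condition that makes your uniformity claim work. (Your statement that the relevant dimensions ``depend only on degrees'' is not literally true --- the various $h^0$'s can jump --- only the upper bounds you need are numerical.)

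It is worth noting that once Lange's bound is inserted, your direct route does close, and by a path genuinely different from the paper's. Since $d\geq rg+1$ and $\deg L\leq 2g-1$ give $\deg L-d/r<g-1$, Lange's bound forces $(s-1)(g-1)\leq\deg L-d/r$ for a rank-$s$ destabilizer, hence $s=1$: every destabilizing subsheaf is $L(-D'')$ with $0\leq\deg D''\leq\deg L-\lceil d/r\rceil$, the corresponding kernel has dimension at most $(r-1)\deg D''$, these move in $\deg D''$ parameters, and $r\deg D''\leq r\deg L-d<r\deg L-d+(r-1)(g-1)=\ext^1(L,F)$. Carried out explicitly, this gives a one-step proof that avoids the paper's detour through the minimal-degree quotient and the subsequent elementary and dual elementary modifications. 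As written, however, the count that constitutes the entire content of the stability claim is a placeholder, so the proposal has a genuine gap at its central step.
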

\begin{proof} We begin by adding points to the divisor $D$ to make its degree as large as possible. We then use elementary and dual elementary modifications \cite[Def. 1.6]{teixidor-russo} to remove the extra points. Define the integer $j$ by
\[
2g-j- \deg D = \begin{cases}
\lceil \frac{d}{r}\rceil, &  \text{if } r \nmid d\\
\frac{d}{r} + 1,  &\text{if }r \vert d
\end{cases}
\]

By the definition of $j$, we must have $j \geq 1$. If $j \geq 2$, pick generic points $q_1, \dots, q_{j-1} \in C$ with $p \neq q_i$ for all $1 \leq i \leq j-1$. 

We first show that for $F$ generic in its moduli space, a general extension of $\O(K+p-D-q_1 - \cdots - q_{j-1})$ by $F$ is stable. This is the setup of \cite[Prop. 1.11]{teixidor-russo}.  Moreover, we will show that $F$ can be taken to move within an open subset of $U_C(r-1,d-(2g-j-l))$ that does not depend on $p$, $D$, or the points $q_1, \dots, q_{j-1}$. To do this, we retread the argument of \cite[Prop. 1.11]{teixidor-russo} and highlight the genericity assumption on the bundle $F$. We later compute the dimension of this space of extensions and show that it is bounded below by $(r-1)(g-1)+1$ (see Equations (\ref{eqn:extension_space_dimension_1}) and (\ref{eqn:extension_space_dimension_2}).)

Let $E$ be such an extension, so that we have the following exact sequence:
\begin{equation}\label{eqn:exact_seq_lowerdeg}
    0 \to F \to E \to \O(K+p-D-q_1 - \cdots - q_{j-1}) \to 0.
\end{equation}

Dualizing the above sequence, we get an exact sequence exhibiting $E^\ast$ as an extension of $F^\ast$ by $\O(-K-p+D+q_1+\cdots + q_{j-1})$. We have
\begin{equation}\label{eqn:bound_slope_fdual}
\mu(F^\ast) - \mu(E^\ast) = \frac{r(2g-j-\deg D)-d}{r(r-1)} \leq \frac{1}{r-1}.
\end{equation}

Suppose $E^\ast$ is not stable so that there exists a maximal destabilizing subbundle $G \subset E^\ast$ with $\mu(G) \geq \mu(E^\ast)$. Without loss of generality we may assume $G$ is stable. Since $\mu(G) \geq  \mu(E^\ast) > j-2g$, $G$ cannot map to $\O(-K-p+D+q_1 + \cdots + q_{j-1})$. Thus the composition $G \to E^\ast \to F^\ast$ is nonzero. Let $G'$ denote the image of $G$ in $F^\ast$.

We show next that $\rk(G') = r-1$. Suppose not. Then $\rk(G') < r-1$. Since $F$ is generic, we have by \cite[Satz 2.2]{lange},
\begin{equation}\label{eqn:bound_slope_g'}
\mu(G') \leq \mu(F^\ast) - \left(1 - \frac{\rk(G')}{r-1} \right) (g-1).
\end{equation}
Combining (\ref{eqn:bound_slope_fdual}) and (\ref{eqn:bound_slope_g'}), we have
\begin{equation}\label{eqn:bound_slope_all}
\mu(E^\ast) \leq \mu(G) \leq \mu(G') \leq \mu(E^\ast) + \frac{1}{r-1} - \left(1 - \frac{\rk(G')}{r-1}\right)(g-1).
\end{equation}
In particular, we must have
\[
\frac{1}{r-1} - \left(1 - \frac{\rk(G')}{r-1}\right)(g-1) \geq 0.
\]
Equivalently,
\[
(r-1-\rk(G'))(g-1) \leq 1.
\]

If $ g \geq 3$, we have $rk(G') = r-1$ as claimed. Assume now that $ g = 2$. Then we must have $\rk G' = r-2$. The inequality (\ref{eqn:bound_slope_all}) becomes an equality, and consequently the inequalities (\ref{eqn:bound_slope_fdual}) and (\ref{eqn:bound_slope_g'})  become equalities too. Applying (\ref{eqn:bound_slope_fdual}) again, we have
\begin{equation}\label{eqn:dual_slope}
\mu(F^\ast) = \mu(E^\ast) + \frac{1}{r-1}.
\end{equation}

Since $g=2$, by the definition of $j$ we have 
\[ 4 - j - \deg (D) > \frac{d}{r}, \]
and by hypothesis we have 
\[ \frac{d}{r} \geq 2 + \frac{1}{r} . \]
Combining these inequalities with the fact that $D$ is effective, we have 
\[ 2 - j - \frac{1}{r} > \deg (D) \geq 0 , \]
which forces $j=1$ and $D=0$. The above equation (\ref{eqn:dual_slope}) becomes
\[
\frac{3-d}{r-1} = - \frac{d}{r} + \frac{1}{r-1},
\]
and thus $d=2r$, which is not possible because we have assumed $d \geq 2r+1$. We conclude that $\rk G' = r-1$ for all $g \geq 2$.

Now (\ref{eqn:bound_slope_fdual}) and the fact that $\mu(G') \leq \mu(F^\ast)$ together imply the following chain of inequalities.
\begin{equation}\label{eqn:bound_slope_eff}
-\frac{d}{r} \leq \frac{\deg G}{r-1} \leq \frac{-d + 2g-\deg D-j}{r-1} \leq -\frac{d}{r} + \frac{1}{r-1}.
\end{equation}

Multiplying (\ref{eqn:bound_slope_eff}) by $r-1$ we see that one of the following must hold:
\begin{enumerate}[(i)]
    \item $\deg G = -d + 2g-\deg D-j$, or
    \item $\deg G = -d + 2g-\deg D-j-1$.
\end{enumerate}
In case (i), we have $G = F^\ast$ because $F^\ast$ is stable. But $E^\ast$ is a generic extension and the exact sequence (\ref{eqn:exact_seq_lowerdeg}) does not split, so this is impossible. Thus $\deg G = -d + 2g-\deg D-j - 1$ and we have an exact sequence
\begin{equation}\label{eqn:exact_seq_defining_G}
0 \to G \to F^\ast \to \O_{p'} \to 0
\end{equation}
for some $p' \in C$. Consider now the pullback diagram
\[
\xymatrix{
0 \ar[r] & \O(-K-p+D+q_1+\cdots+q_j) \ar[r] & E^\ast \ar[r] & F^\ast \ar[r] & 0\\
0 \ar[r] & \O(-K-p+D+q_1+\cdots+q_j) \ar[r]\ar[u] & E^\ast \times_{F^\ast} G \ar[r]\ar[u] & G \ar[r]\ar[u] & 0
}
\]
The second row splits because $G$ is a subbundle of $E^\ast$, and so the corresponding element of $\Ext^1(F^\ast, \O(-K-p+D+q_1 + \cdots + q_{j-1}))$ is in the kernel of the map
\[
H^1(F(-K-p+D+q_1 + \cdots + q_{j-1})) \to H^1(G^\ast(-K-p+D+q_1+\cdots+q_{j-1})),
\]
and thus is in the image of the map
\[
H^0(\O_{p'}) \to H^1(F(-K-p+D+q_1 + \cdots + q_{j-1})).
\]

Thus given $G$, $E^\ast$ can vary in a family of dimension at most 1. The bundle $G$ itself depends on the choice of point $p'$ in (\ref{eqn:exact_seq_defining_G}) and an element of $\P(F_{p'})$. It follows that the locus of unstable $E^\ast$ has dimension at most $1+(r-2) + 1 = r$ in $\Ext^1(F^\ast, \O(-K-p+D+q_1+\cdots+q_{j-1}))$.

We now show that the space of extensions has dimension greater than $r$. By Serre duality,
\begin{equation}\label{eqn:extension_space_dimension_1}
\ext^1(F^\ast, \O(-K-p+D+q_1+\cdots+q_{j-1})) = h^0(F^\ast(2K+p-D-q_1-\cdots-q_{j-1})).
\end{equation}
Since $d \leq r(2g-j) - r \deg (D) -1$ by the definition of $j$, we must have
\begin{equation}\label{eqn:extension_space_dimension_2}
\begin{aligned}
    h^0(F^\ast(2K+p-D-q_1-\cdots-q_{j-1})) &= \chi(F^\ast(2K+p-D-q_1-\cdots-q_{j-1}))\\
    &= (r-1)(3g-1-\deg D-j)+(-d+2g-\deg D-j)\\
    &\geq (r-1)(g-1)+1.
\end{aligned}
\end{equation}
As a consequence, we have 
\[ 
\ext^1(F^\ast, \O(-K-p+D+q_1+\cdots+q_{j-1})) \geq (r-1)(g-1)+1.
\]
We distinguish two cases: $g = 2$ and $g \geq 3$. When $g \geq 3$, the above gives $h^0(F^\ast(2K+p-D-q_1-\cdots-q_{j-1})) \geq r+1$. We have seen that the space of extensions with $E$ unstable has dimension at most $r$, thus the general one must be stable.

If $g=2$, then $j=1$, $D=0$, and
\[
h^0(F^\ast(2K+p)) = 4r-1-d.
\]
We computed above that $\deg G = 3-3r$. Since $2r + 1 \leq d \leq 3r-1$ by assumption, we see that $h^0(F^\ast(2K+p)) \geq r+1$ unless $d=3r-1$. If $d=3r-1$, then (\ref{eqn:bound_slope_eff}) gives
\[
\frac{1-3r}{r} \leq \frac{\deg G}{r-1} = \frac{3-3r}{r-1},
\]
which is impossible.

Note that $ \rk (G') = r-1$ implies that the map $ G \to G'$ is an isomorphism. Indeed, $\rk (G') = r-1$ forces $\rk(G) \geq r-1$, and since $G$ is a maximal destabilizing subbundle of a non-stable rank-$r$ vector bundle, we have $\rk(G) \leq r-1$. Since the map $G \to G'$ is a surjection between two vector bundles of the same rank, it must be an isomorphism. 

The only requirement on the genericity of $F$ we have used in the above argument is that it needs to satisfy \cite[Satz 2.2]{lange} bounding the slope of its subsheaves. To finish the proof of the theorem, we now use a sequence of elementary and dual elementary modifications to produce a stable $E$ fitting into an exact sequence of the form
\[
0 \to F \to E \to \O(K+p-D) \to 0.
\]

Given an exact sequence of the form (\ref{eqn:exact_seq_lowerdeg}), apply a generic dual elementary modification (\cite[Def. 1.6]{teixidor-russo}) at the point $q_1$. We get an exact sequence (\cite[Lemma 1.8]{teixidor-russo})
\[
0 \to F \to E' \to \O(K+p -D- q_2 - \cdots - q_{j-1}) \to 0.
\]
Applying a generic elementary modification at a general point $p'' \in C$, we get another exact sequence (\cite[Lemma 1.7]{teixidor-russo})
\[
0 \to F' \to E'' \to \O(K+p-D - q_2 - \cdots - q_{j-1}) \to 0.
\]

By varying the point $p''$ we obtain a family of vector bundles with $ \deg (E'') = \deg (E)$. When $p'' = q_1$, there is an elementary modification that recovers the bundle $E$ we began with. Since $E$ is stable, the generic $E''$ in this family is stable. Since $F$ and $p''$ are generic, $F'$ is stable \cite[Lemma 2.5]{ballico-ramella}, and the locus of such $F'$ is dense in the moduli space \cite[Rmk. 2.6]{ballico-ramella}. We can now repeat this process, beginning with an elementary modification at $q_2$ and repeating to eliminate the $q_{i}$'s and produce a stable extension of the desired form. Since the family of bundles $F$ we started with was independent of $p$, $D$, and  the $q_i$, it follows that the family of bundles of rank $r-1$ and degree $d - (2g -1-l)$ we obtain via generic elementary modifications is also independent of $p$ and $D$.
\end{proof}

We now consider the following subsets of $N_C(r,d)$. We will show that they are nonempty open subsets. We will use them while analyzing the irreducible components of $N_C(r,d)$.
\begin{definition}
Let $r \geq 2$ and $d$ be integers. We define the following loci in $U_C(r,d)$.
\begin{align*} N^0_C(r,d) &= \{ E \in N_C(r,d) : h^1(E) = 0 \}\\
N^f_C(r,d) &= \{ E \in N_C(r,d) : h^1(E) = 0 \text{ and $E$ has finitely many basepoints} \} \end{align*}

\end{definition}
Since vanishing of the first cohomology is an open condition, we see that $N^0_C(r,d)$ is an open subset of $N_C(r,d)$ and is nonempty when $d > r(g-1)$ by Proposition \ref{prop:easy_vb_facts}. 

\begin{lemma}\label{lemma:finite_bps}
Suppose $r \geq 1$ and $rg \leq d \leq r(2g-1)-1$ or $r=1$ and $d=2g-1$. Then the locus $N_C^f(r,d)$ is a nonempty open subset of $N_C(r,d)$
\end{lemma}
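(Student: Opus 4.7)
The plan is to establish openness first and then produce explicit examples for nonemptiness, subdividing into cases.

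For openness, I would work on a local universal family $\mathcal{E}$ on $B \times C$ for $N_C(r,d)$. Since $h^1(\mathcal{E}_b) = 0$ is an open condition, on $N_C^0(r,d) \subset N_C(r,d)$ the direct image $\pi_{\ast}\mathcal{E}$ is locally free, and the evaluation morphism $\pi^{\ast}\pi_{\ast}\mathcal{E} \to \mathcal{E}$ is a map of vector bundles on $B \times C$. Its degeneracy locus $Z \subset B \times C$ has fiber $Z_b$ equal to the basepoint scheme of $\mathcal{E}_b$. Upper semicontinuity of fiber dimension for $Z \to B$ shows that $\{b : \dim Z_b \geq 1\}$ is closed in $B$, and its complement in $N_C^0(r,d)$ is exactly $N_C^f(r,d)$, yielding openness.

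For the easy nonemptiness cases, when $r = 1$ every line bundle has finite basepoint scheme (contained in the zero set of any nonzero section), so $N_C^f(1,d) = N_C^0(1,d)$, which is nonempty for the given values of $d$ by \S\ref{sec:linebundles} together with generic vanishing of $h^1$. When $r \geq 2$ and $d = rg$, Proposition \ref{prop:easy_vb_facts}(a),(b) gives that a general stable $E$ is not globally generated and satisfies $h^1(E) = 0$, so $h^0(E) = \chi(E) = r$. The evaluation morphism $\O_C^r \to E$ is then a map of rank-$r$ vector bundles whose determinant is a nonzero section of $\det E$ (nonzero because a basis of $H^0(E)$ is linearly independent at the generic point) of positive degree $rg$; its zero scheme is the basepoint locus of $E$, a nonempty finite set, witnessing $E \in N_C^f(r,rg)$.

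For the main case $rg + 1 \leq d \leq r(2g-1) - 1$ with $r \geq 2$, I would apply Proposition \ref{thm:existence_stable_seq} to produce a stable extension
\[
0 \to F \to E \to L \to 0, \qquad L = \O(K + p - D),
\]
with $l = \deg D$ and $E$ having a basepoint at $p$. A snake-lemma diagram chase shows that whenever $H^1(F) = 0$, basepoints of $E$ are contained in the union of basepoints of $F$ and basepoints of $L$; the latter is finite since $L$ is a line bundle. I would then split into two subcases. For $d \geq (r+1)g$, take $l = 0$, so that $\deg F = d - (2g-1)$ lies in $[(r-1)g + 1,\, (r-1)(2g-1) - 1]$, and by Proposition \ref{prop:easy_vb_facts}(e) a generic $F$ in the dense collection of Proposition \ref{thm:existence_stable_seq} is globally generated with $h^1(F) = 0$. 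For $rg + 1 \leq d \leq (r+1)g - 1$, take $l = (r+1)g - 1 - d \in [0, g-2]$, so that $\deg F = (r-1)g$ and one may choose $F \in N_C^f(r-1, (r-1)g)$, nonempty by the easy case applied to rank $r-1$. In each subcase, $h^1(L) = 0$ holds for generic $p$ and $D$, so $h^1(E) = 0$, and the basepoint locus of $E$ is finite and contains $p$, giving $E \in N_C^f(r,d)$. The main technical point is balancing the constraint $2g-1-l > d/r$ of Proposition \ref{thm:existence_stable_seq} against the requirements $\chi(F) \geq 0$ and $\deg F \geq (r-1)g$; the case split at $(r+1)g$ is what makes these compatible across the full range.
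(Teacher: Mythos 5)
Your overall strategy---openness via semicontinuity of the fiber dimension of the degeneracy locus of the evaluation map, and nonemptiness via the three cases $r=1$, $d=rg$, and extensions produced by Proposition \ref{thm:existence_stable_seq} with $\deg F$ pushed into the range where rank-$(r-1)$ bundles are understood---is essentially the paper's proof. Your explicit case split at $d=(r+1)g$ is a particular instance of the paper's choice of an integer $j$ with $(r+1)g-d\leq j<2g-d/r$, and your openness argument, the $r=1$ case, the snake-lemma containment of the base locus of $E$ in those of $F$ and $L$ when $h^1(F)=0$, and both subcases of the main induction are sound.

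The one genuine gap is in the base case $d=rg$, $r\geq 2$. You assert that the determinant of the evaluation map $\O_C^{\oplus r}\to E$ is nonzero ``because a basis of $H^0(E)$ is linearly independent at the generic point,'' but that is precisely the statement requiring proof: linear independence of sections over the ground field does not imply linear independence over the function field, since a priori all $r$ sections could factor through a proper subsheaf $E'\subset E$ of rank $r'<r$. Stability only gives $\mu(E')<g$, hence $\chi(E')\leq r'-1<r\leq h^0(E')$, which forces $h^1(E')$ to be large but yields no contradiction without further input (a Lange-type bound on subsheaves of the general $E$ plus a Brill--Noether count, say). The claim is true for a \emph{general} stable $E$, but it needs an argument; the paper supplies one by starting from the polystable bundle $L^{\oplus r}$ with $L$ general of degree $g$, whose base locus is visibly finite, deforming it to a stable bundle via \cite[Prop.\ 2.6]{narasimhan-ramanan}, and using that surjectivity of $H^0(\mathscr{T}_t)\to(\mathscr{T}_t)_p$ is an open condition on $(t,p)$. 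Since your second subcase relies on $N_C^f(r-1,(r-1)g)\neq\emptyset$, the gap propagates through the induction; you should either import the paper's deformation argument for the base case or prove generic spannedness of the general stable bundle of degree $rg$ directly.
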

\begin{proof}
If $r=1$, then $E$ is a line bundle $L$. Since $d \geq g$, we have $h^0(L) > 0$. A section defines a map $\O \to L$ whose cokernel is a torsion sheaf supported on a finite collection of points. The basepoints of $L$ must be a subset of this collection, hence there are finitely many.

Suppose $r \geq 2$. Our first goal is to show that $N^f_C(r,d)$ is an open subset of $N_C(r,d)$. Let $\mathscr{T} \to T \times C$ be a family of stable vector bundles on $C$ with $h^1(\mathscr{T}_t) = 0$ for all $t \in T$. Let $\Sigma$ denote the incidence correspondence
\[
\Sigma = \{(t,p): h^1(\mathscr{T}_t(-p))>0\} \subset T \times C.
\]
Then a bundle $\mathscr{T}_t$ has infinitely many basepoints if and only if the fiber over $t$ is positive-dimensional. Thus the openness of $N_C^f(r,d)$ follows from the semicontinuity of dimension of the fibers.

We proceed to show that $N^f_C(r,d)$ is non-empty. If $r \geq 2$ and $d = rg$, then put $E = L^{\oplus r}$, where $L$ is a degree $g$ line bundle with $h^0(L) = 1$ and $h^1(L) = 0$. Clearly $h^1(E) = 0$. By our discussion for the $r=1$ case, $L$ has finitely many basepoints, and hence, $E$ also has finitely many basepoints. By \cite[Prop. 2.6]{narasimhan-ramanan}, we may deform $E$ to a stable bundle. Let $\mathscr{T} \to T \times C$ be a family containing $E$ and a stable bundle, and let $\pi:T \times C \to T$ be the projection. The map $\pi^\ast \pi_\ast \mathscr{T} \to \mathscr{T}$ restricts to the map $H^0(\mathscr{T}_t) \to (\mathscr{T}_t)_p$ at a point $(t,p) \in T \times C$. When $\mathscr{T}_t = E$ and $p$ is not a basepoint of $E$, this map is surjective, hence it is surjective for a general $(t, p)$. Moreover, $\mathscr{T}_t$ is stable with $h^1(\mathscr{T}_t)=0$ for a general $t$. In particular, $N^f_C(r,rg)$ is nonempty.

Finally, we look at the case $r \geq 2$ and $ rg+1 \leq d \leq r(2g-1)-1$. We induct on the rank and degree. We wish to take an extension of a line bundle by a rank $r-1$ bundle as in Proposition \ref{thm:existence_stable_seq} to apply the inductive hypothesis. Let $j$ be a positive integer satisfying
\[
(r+1)g-d \leq j < 2g - \frac{d}{r}.
\]
Note that $2g - d/r$ is always positive because $d \leq r(2g-1)-1$. Since $j< 2g - d/r$, we have 
\[
\frac{d-2g+j}{r-1} < \frac{d}{r} < 2g-j.
\]
As a consequence of $(r+1)g - d \leq j$ and above inequality, we see that 
\[ (r-1)g \leq d - 2g +j < (r-1)(2g-j)   \] 
Using Proposition \ref{prop:easy_vb_facts} along with the inductive hypothesis, a generic stable bundle $F$ of rank $r-1$ and degree $d-2g+j$ will have $h^1(F) = 0$ and at most finitely many basepoints. By Proposition \ref{thm:existence_stable_seq}, there exists an extension of the form
\[
0 \to F \to E \to \O(K+p-q_1 -\cdots -q_{j-1}) \to 0
\]
with $E$ stable. Since $rg+1 \leq d$, we have $2g - d/r \leq g - 1/r$, and consequently, $j-1 \leq g-1$. If the points $p, q_1, \dots, q_{j-1}$ are chosen to be general, then $h^1(\O(K+p-q_1 -\cdots -q_{j-1})) =0$ because a general effective divisor of degree $j-1$ has at most one global section \cite[IV.1.7]{acgh}. It follows from above exact sequence that $E$ has $p$ as a basepoint. Since $F$ and $O(K+p - q_1 - \cdots - q_{j-1})$ have at most finitely many basepoints and trivial first cohomology, so does $E$. 
\end{proof}

The next proposition shows that the locus $N_C(r,d)$ is determinantal, at least on the open subset of $U_C(r,d)$ consisting of sheaves with the expected number of global sections. To prove this, we use a Poincar\'e family on the moduli space or a suitable \'etale cover.

\begin{prop}\label{prop:determinantal_description}
Let $N_C^0(r,d) \subset N_C(r,d)$ denote the set of stable, non--globally generated $E$ with $h^1(E) = 0$. Then every component of $N_C^0(r,d)$ is either of the expected codimension $d - rg$ or consists of vector bundles that are nowhere globally generated, in which case the component has codimension at most $d-rg+1$.
\end{prop}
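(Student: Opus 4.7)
The plan is to realize $N_C^0(r,d)$ as the set-theoretic projection of a determinantal locus on an \'etale cover of the moduli space, and then to invoke the standard codimension bound for degeneracy loci.

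First, let $U_C(r,d)^0 \subset U_C(r,d)$ be the open locus on which $h^1$ vanishes. I pass to an \'etale cover $\tau \colon U \to U_C(r,d)^0$ carrying a Poincar\'e family $\mathscr{E} \to U \times C$, which is standard (cf. the appendix of \cite{hoffman}). Writing $\pi \colon U \times C \to U$ for the projection, cohomology and base change make $V := \pi_{\ast} \mathscr{E}$ a vector bundle of rank $n := d - r(g-1)$ on $U$, and the evaluation map
\[
\phi \colon \pi^{\ast} V \longrightarrow \mathscr{E}
\]
is a morphism of vector bundles on $U \times C$ of source rank $n$ and target rank $r$. A point $(E, p) \in U \times C$ lies in the rank-$\leq (r-1)$ locus
\[
\Sigma := \{(E, p) : \rk \phi_{(E,p)} \leq r - 1\} \subset U\times C
\]
precisely when $p$ is a basepoint of $E$, so $\tau(\pi(\Sigma))=N_C^0(r,d)$ set-theoretically.

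Next I would apply the standard codimension bound for determinantal loci (Fulton): every component $\Sigma_i$ of $\Sigma$ has codimension at most $(n - (r-1))(r - (r-1)) = d - rg + 1$ in $U \times C$, so
\[
\dim \Sigma_i \;\geq\; \dim U + 1 - (d - rg + 1) \;=\; \dim U_C - (d - rg).
\]
Every component $N$ of $N_C^0(r,d)$ pulls back under $\tau$ to a union of closures $\overline{\pi(\Sigma_i)}$, and the fiber of $\pi|_{\Sigma_i}$ over $E$ is the (closed) basepoint locus of $E$ in $C$. Two cases arise based on the generic fiber dimension of $\pi|_{\Sigma_i}$. If this generic fiber is zero-dimensional, then $\dim N = \dim \Sigma_i$, giving codimension at most $d - rg$ in $U_C$; combined with the opposite inequality derived from Sundaram's theorem in the introduction, the codimension equals $d - rg$. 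If instead the generic fiber is positive-dimensional, then by upper semicontinuity of fiber dimension every fiber is positive-dimensional, and since $C$ is irreducible of dimension one each such (closed) fiber must equal $C$. Hence every bundle parameterized by $N$ is nowhere globally generated, and $\dim N = \dim \Sigma_i - 1 \geq \dim U_C - (d-rg+1)$, yielding codimension at most $d - rg + 1$.

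The main technical obstacle is that a universal family need not exist on $U_C(r,d)^0$ itself, forcing the passage to an \'etale cover $U$; however, since $\tau$ has relative dimension zero and is surjective onto $U_C(r,d)^0$, the codimension bounds descend without change. A secondary point to double-check is the cleanness of the dichotomy on fiber dimensions---this is precisely where the one-dimensionality of $C$ is essential, collapsing the positive-dimensional case into the ``nowhere globally generated'' case.
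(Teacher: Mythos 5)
Your proposal is correct and follows essentially the same route as the paper: realize the basepoint locus as the rank $\leq r-1$ degeneracy locus of the evaluation map $\pi^\ast\pi_\ast\mathscr{E}\to\mathscr{E}$ on (the $h^1=0$ part of) an \'etale cover carrying a Poincar\'e family, apply the determinantal codimension bound $d-rg+1$, and split on whether the projection to the moduli space is generically finite, using that a positive-dimensional closed fiber in the curve $C$ must be all of $C$. The only cosmetic difference is that you close the generically finite case with the Sundaram lower bound from the introduction, whereas the paper cites its Lemma \ref{lemma:finite_bps} on finiteness of basepoint loci; both are available and the conclusion is the same.
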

\begin{proof}
By \cite[Prop. 2.4]{narasimhan-ramanan}, there exists an \'etale covering $\mathcal{M}\to U_C(r,d)$ that carries a Poincar\'e family $\mathscr{E} \to \mathcal{M} \times C$. When $r$ and $d$ are coprime, we may take $\mathcal{M}$ to be $U_C(r,d)$ itself. The scheme $\mathcal{M}$ parameterizes a family $\{\mathscr{E}_m\}_{m \in \mathcal{M}}$ of stable sheaves of rank $r$ and degree $d$.

Define $Y \subset \mathcal{M}$ to be the open subset $\{m \in \mathcal{M}: h^1(\mathscr{E}_m) = 0\}$. Denote by $\mathscr{E}|_{Y \times C}$ the restriction of the Poincar\'e bundle to $Y \times C$ and let $\pi_1$ and $\pi_2$ denote the projections to the first and second factors, respectively. The sheaf $\pi_1^\ast\pi_{1\ast} \mathscr{E}|_{Y \times C}$ is locally free on $Y$ with fiber over $(m,p)$ equal to $H^0(\mathscr{E}_m)$.

Let $p \in C$ be a point. The natural evaluation map $E \to E_p$ for any vector bundle $E$ induces a map
\[
\pi_1^\ast\pi_{1\ast} \mathscr{E} \to \mathscr{E} \otimes \pi_2^\ast \O_p,
\]
For a point $m \in Y$, write $E=\mathscr{E}_m$ and take fibers of the above map at $(m,p) \in Y \times C$. Then the map becomes
\[
H^0(E) \to E_p,
\]
where the former has rank $\chi(E) = d + r(1-g)$ on $Y$ and the latter has rank $r$. This map fails to be surjective at $p$ if and only if $p$ is a basepoint of $E$. Thus the determinantal locus where the map
\[
\pi_{1\ast} \mathscr{E}|_{Y \times C} \to \pi_{1\ast} (\mathscr{E}|_{Y \times C} \otimes \pi_2^\ast \O_p)
\]
has rank at most $r-1$ is the locus $\Sigma \subset Y \times C$ parameterizing pairs $(E,p)$ where $E$ is stable with a basepoint at $p$. The codimension of this locus is at most $d-rg+1$ by the theory of determinantal varieties \cite[Ch. 2]{acgh}.

As a consequence of Lemma \ref{lemma:finite_bps} the general bundle $E$ has finitely many basepoints, so the fibers of the natural surjective map $\Sigma \to N_C^0(r,d)$ are generically finite. Thus the general point in $N_C^0(r,d)$ is in a component of codimension $d-rg$.

If $E$ is a bundle such that $H^0(E) \to E_p$ is not surjective for all $p$ in $C$, then the fiber of $\Sigma \to N^0_C(r,d)$ is $C$ itself, in which case we see that $E$ is contained in a component of $N^0_C(r,d)$ of codimension at most $d-rg+1$.
\end{proof}

An immediate consequence of Proposition \ref{thm:existence_stable_seq} along with Proposition \ref{prop:determinantal_description} is that $N_C(r,d)$ always contains a component of the expected dimension.

\begin{cor}\label{cor:family_of_exp_dim}
Suppose $rg+1 \leq d \leq r(2g-1)-1$. Then $N_C(r,d)$ has a component of the expected dimension.
\end{cor}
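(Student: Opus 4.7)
The plan is to bootstrap directly from the two results the corollary cites. Proposition \ref{prop:determinantal_description} already tells us that every component of $N_C^0(r,d)$ has codimension at most $d-rg+1$, with the exceptional case (codimension exactly $d-rg+1$) occurring only for components consisting entirely of bundles that are nowhere globally generated. So to produce a component of $N_C(r,d)$ of the expected codimension $d-rg$, it suffices to exhibit a single point of $N_C^0(r,d)$ that has at least one non-basepoint, and then argue that $N_C^0$ sits inside $N_C$ as an open set.

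Concretely, I would first invoke Lemma \ref{lemma:finite_bps}, which asserts that for $rg+1 \leq d \leq r(2g-1)-1$ the locus $N_C^f(r,d) \subset N_C^0(r,d)$ is nonempty. Any $E \in N_C^f(r,d)$ has $h^1(E)=0$ and only finitely many basepoints, so in particular some $p \in C$ fails to be a basepoint of $E$. Next, I would apply Proposition \ref{prop:determinantal_description}: the component of $N_C^0(r,d)$ containing $E$ cannot be of the ``nowhere globally generated'' type (that type forces every point of $C$ to be a basepoint), so the dichotomy forces it to have codimension exactly $d-rg$, i.e.\ the expected codimension.

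Finally, I would transfer this from $N_C^0(r,d)$ to $N_C(r,d)$. Since vanishing of $H^1$ is an open condition, $N_C^0(r,d)$ is an open subset of $N_C(r,d)$; hence any irreducible component $Z$ of $N_C^0(r,d)$ sits inside a unique irreducible component $\overline{Z}$ of $N_C(r,d)$, and $Z$ is dense in $\overline{Z}$, so $\dim \overline{Z} = \dim Z$. Applying this to the component $Z$ containing the bundle $E$ found above gives a component $\overline{Z} \subset N_C(r,d)$ of codimension $d-rg$ in $U_C(r,d)$, which translates into dimension $r^2(g-1)+1-(d-rg)$, exactly the expected dimension from \eqref{eqn:expected_dimension}.

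There is no real obstacle here beyond verifying the bookkeeping that ``open subset preserves dimensions of components,'' because the substantive content (existence of an $E$ with finitely many basepoints and trivial $H^1$, and the determinantal codimension bound with its dichotomy) has already been handled in Proposition \ref{thm:existence_stable_seq}, Lemma \ref{lemma:finite_bps}, and Proposition \ref{prop:determinantal_description}. The corollary is thus essentially a packaging statement.
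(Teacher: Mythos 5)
Your proof is correct and follows essentially the same route as the paper, whose entire proof is the one-line observation that any component of $N_C(r,d)$ meeting the nonempty open set $N^f_C(r,d)$ has the expected dimension; you have simply unpacked that sentence into its constituent steps (nonemptiness from Lemma \ref{lemma:finite_bps}, the dichotomy of Proposition \ref{prop:determinantal_description}, and the transfer of dimension along the open inclusion $N_C^0(r,d)\subset N_C(r,d)$). The only minor quibble is your paraphrase of Proposition \ref{prop:determinantal_description} as asserting codimension \emph{exactly} $d-rg+1$ in the exceptional case (it only asserts at most), but the implication you actually use --- not nowhere globally generated implies codimension $d-rg$ --- is the correct one.
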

\begin{proof}
Every irreducible component of $N_C(r,d)$ that intersects $N^f_C(r,d)$ non-trivially will have the expected dimension.
\end{proof}

If we impose slightly stronger restrictions on the degree of $E$, we can show that the family of bundles we constructed has the expected codimension, even outside of $N_C^0(r,d)$.

\begin{prop}\label{prop:lowerbound_vb}
Assume $r \geq 2$, $g \geq 2$, $1 \leq j \leq g-1$ and $rg+g-j \leq d \leq r(2g-j)-1$. Then the locus of $E \in U_C(r,d)$ that can be expressed in an exact sequence of the form
\[
0 \to F \to E \to \O(K+p-q_1-q_2-\cdots-q_{j-1}) \to 0
\]
where $p,q_1,q_2, \dots, q_{j-1}$ are in $C$, $p \neq q_i$ for all $i$, and $F \in U_C(r-1,d-(2g-j))$, has the expected codimension $d-rg+(r-1)(j-1)$.
\end{prop}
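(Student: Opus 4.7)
The plan is to realize the locus in question as the image of a parameter space $T$ whose dimension can be computed directly via Riemann--Roch, and then to show that this parametrization map is generically finite.

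First I construct $T$: on (an \'etale cover of) $U_C(r-1, d-(2g-j)) \times C \times C^{(j-1)}$, restricted to the open subset where $p \notin \mathrm{supp}(D)$, take the projectivization of the vector bundle whose fiber at $(F, p, D)$ is $\Ext^1(\O(K+p-D), F)$. Since $d \le r(2g-j)-1$, for generic $F$ the twist $F \otimes \O(D-K-p)$ is stable of negative degree $d - r(2g-j)$, so $h^0$ vanishes and Riemann--Roch gives $h^1 = r(2g-j) - d + (r-1)(g-1)$. Summing the four contributions produces
\[
\dim T = r(r-1)(g-1) + j + r(2g-j) - d = \dim U_C(r,d) - \bigl[d - rg + (r-1)(j-1)\bigr].
\]
By Proposition \ref{thm:existence_stable_seq}, a dense open subset of $T$ yields stable extensions, so taking the middle bundle defines a rational map $\Phi : T \dashrightarrow U_C(r,d)$ whose image is precisely the locus of the proposition. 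The tautological bound $\dim \Phi(T) \le \dim T$ immediately gives codimension at least $d - rg + (r-1)(j-1)$.

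The reverse inequality reduces to showing $\Phi$ is generically finite. The construction in the proof of Lemma \ref{lemma:finite_bps} produces extensions of exactly the form parametrized by $T$ (with the same choice of $j$ in terms of $d$), so $\Phi(T) \cap N^f_C(r,d) \ne \emptyset$ and the generic $E \in \Phi(T)$ has only finitely many basepoints. For such a basepoint $p$, twisting $0 \to F \to E \to \O(K+p-D) \to 0$ by $\O(-p)$ and taking cohomology gives an exact sequence
\[
H^1(F(-p)) \to H^1(E(-p)) \to H^1(\O(K-D)) \to 0.
\]
The hypothesis $d \ge rg + g - j$ translates to $\chi(F(-p)) \ge 0$, so Lemma \ref{lemma:gen_bundle_one_nonzero_coh} yields $h^1(F(-p)) = 0$ for a generic stable $F$; meanwhile $h^1(\O(K-D)) = h^0(\O(D)) = 1$ for a generic effective $D$ of degree $j-1 \le g-2$ by \cite[IV.1.7]{acgh}. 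Hence $h^0(E^\ast(K+p)) = h^1(E(-p)) = 1$ by Serre duality, so there is a unique (up to scalar) morphism $E \to \O(K+p)$; its image is the saturation $\O(K+p-D)$ for a uniquely determined $D$, and then $F$ is forced to be the kernel. The fiber $\Phi^{-1}(E)$ is thus in bijection with the finite set of basepoints of $E$ whose associated image divisor has degree $j-1$, proving generic finiteness.

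The main obstacle is this generic-finiteness step, which is where the stronger numerical hypothesis $d \ge rg + g - j$ (as opposed to just $d \ge rg + 1$, which suffices for Proposition \ref{prop:determinantal_description}) is essential: it is exactly what forces the cohomological vanishing $h^1(F(-p)) = 0$, without which $h^0(E^\ast(K+p))$ could exceed one and the decomposition of $E$ as an extension of this specific form need no longer be unique modulo the scaling in $\Ext^1$.
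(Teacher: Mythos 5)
Your argument is correct and follows essentially the same route as the paper: parameterize the extensions $0 \to F \to E \to \O(K+p-D) \to 0$ over (an \'etale cover of) $U_C(r-1,d-(2g-j)) \times C \times C^{(j-1)}$, compute the dimension of the total space via Riemann--Roch and the vanishing $\hom(\O(K+p-D),F)=0$, and prove generic finiteness of the classifying map using $h^1(F(-p))=0$ (forced by $d \ge rg+g-j$) together with finiteness of the basepoint locus. The only (harmless, arguably slightly cleaner) deviation is that you establish $\hom(E,\O(K+p))=1$ by twisting by $\O(-p)$ and using $h^0(\O(D))=1$ for generic $D$, whereas the paper twists by $\O(-p+D)$ and shows $\hom(E,\O(K+p-D))=1$ directly.
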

\begin{proof}
The proof proceeds by counting the dimension of the family parameterizing such extensions, then by studying the classifying map from this family to the moduli space. Proposition \ref{thm:existence_stable_seq} guarantees the existence of at least one stable $E$ of the desired form.

Observe first that there is a universal family parameterizing line bundles of the form $\O(K+p-q_1-\cdots-q_{j-1})$. Indeed, define
\[
\Sigma = \{ (p,q_1, \dots, q_{j-1}) \in C^{\times j} : p \neq q_i \text{ for all } i\}.
\]
Let $\pi: C \times \Sigma \to C$ be the first projection and $\pi_i: C \times \Sigma \to C$ be the composition $C \times \Sigma \to \Sigma \to C$, where the second map is the projection to the $i$th coordinate. Then
\begin{align*}
\pi^\ast\O(K) \otimes (\pi \times \pi_1)^\ast (\O_{C \times C}(\Delta)) \otimes \bigotimes_{i=2}^j (\pi \times \pi_i)^\ast (\O_{C \times C}(-\Delta))|_{C \times (p,q_1,\dots,q_{j-1})}&\\
\cong \O(K+p-q_1 - \cdots -q_{j-1}).&
\end{align*}

By \cite[Prop. 2.4]{narasimhan-ramanan}, if $r \geq 3$, there exists a non-singular, separated scheme $\mathcal{M}$ of finite type parameterizing a family $\mathscr{F} = \{ \mathscr{F}_m \}_{m \in \mathcal{M}}$ of rank $r-1$, degree $d-(2g-1)$ stable vector bundles satisfying the following.
\begin{enumerate}[(i)]
    \item $\mathcal{M}$ has finitely many irreducible components.
    \item $\dim \mathcal{M} = (r-1)^2(g-1) + 1$.
    \item The classifying map $\theta_{\mathscr{F}}: \mathcal{M} \to U_C(r-1, d-(2g-1))$ is \'etale and surjective.
\end{enumerate}

If $r=2$, we write $\mathcal{M} = U_C(1, d-(2g-1))$ and let $\mathscr{F}$ be the universal family of degree $d-(2g-1)$ line bundles. Let $\mathcal{M}_1, \mathcal{M}_2, \dots, \mathcal{M}_n$ be the irreducible components of $\mathcal{M}$. By \cite[Prop. 2.1]{sundaram}, for all $1 \leq i \leq n$ there exists a universal family of extensions $\mathcal{P}_i$ parameterizing the projective spaces $\P(\Ext^1(\O(K+p-q_1-\cdots-q_{j-1}), \mathscr{F}_m))$ as $p$ varies over $C$ and $m$ varies over $\mathcal{M}_i$. Since $\mathscr{F}_m$ is stable and $\mu(\mathscr{F}_m) < 2g-j$, we must have $\hom(\O(K+p-q_1-\cdots-q_{j-1}), \mathscr{F}_m) = 0$. From this we conclude that the fibers $\P(\Ext^1(\O(K+p-q_1-\cdots-q_{j-1}), \mathscr{F}_m)$ are irreducible of the same dimension. Since $\mathcal{M}_i \times C$ is irreducible, so is $\mathcal{P}_i$ for $1 \leq i \leq n$.

Let $\mathcal{P}$ denote the union of the $\mathcal{P}_i$, and let $\mathcal{P}^s \subset \mathcal{P}$ be the open subset consisting of stable extensions. Since $\theta_{\mathscr{F}}$ is \'etale, we have $\dim \mathcal{P}_i = \dim \mathcal{P}_{i'}$ for all $i$, $i'$. This gives,
\[
\dim \mathcal{P}^s = \dim \mathcal{P} = \dim \mathcal{M} + \dim C + \P(\Ext^1(\O(K+p-q_1-\cdots-q_{j-1}), \mathscr{F}_m),
\]
whence
\begin{align*}
\dim \mathcal{P}^s &= [(r-1)^2(g-1)+1] + j + [(r-1)(3g-1-j)+(2g-j)-d-1]\\
&= r^2(g-1) + 1 - (d-rg + (r-1)(j-1)).
\end{align*}

Let $\Phi: \mathcal{P}^s \to U_C(r,d)$ denote the canonical rational map. We show that $\Phi$ is generically finite. For any vector bundle $F$ of degree $d-(2g-j)$ and rank $r-1$, we have
\begin{align*}
-\chi(F) &= \chi(F^\ast(K)) = rg+g-r+1-j-d < 0,\\
\deg(F) &= d-(2g-j) \geq (r-1)g.
\end{align*}

From Lemma \ref{lemma:gen_bundle_one_nonzero_coh}, we see that the general $F \in U_C(r-1, d-(2g-1))$ has $h^1(F)=h^0(F^\ast(K)) = 0$. Since $\theta_{\mathscr{F}}$ is \'etale and hence open, the general $F$ in $\theta_{\mathscr{F}}(\mathcal{M}_i)$ must satisfy $h^0(F^\ast(K)) = 0$. Let $V \subset \mathcal{P}^s$ be the open subset parameterizing stable  extensions 
\[ 0 \to F \to E \to \mathcal{O}(K + p - q_1 - \cdots - q_{j-1} ) \to 0 \]
such that $F$ is stable and $h^1(F) = h^1(F(-p)) = 0$.

We claim that $\Phi|_V$ has finite fibers. Let $E$ be any vector bundle in the image $\Phi(V)$. Then there is an extension
\[
0 \to F \to E \to \O(K+p-q_1-\cdots-q_{j-1}) \to 0
\]
with $F$ stable and $h^1(F) = h^1(F(-p)) = 0$. Twisting the above exact sequence by $\O(-p+q_1+\cdots+q_{j-1})$ and taking the long exact sequence in cohomology, we see that $h^1(E(-p + q_1 + \cdots + q_{j-1} )) = 1$. Then $\hom (E, \O(K+p - q_1 - \cdots - q_{j-1})) = 1$ by Serre duality, and so there is a unique map $E \to \O(K+p - q_1 - \cdots - q_{j-1})$ modulo scalars. This map uniquely determines $F$. Moreover, any other point in $\Phi^{-1}(E)$ must be an extension of the form
\[
0 \to F \to E \to \O(K+p'-q_{1}'-\cdots-q_{j-1}') \to 0,
\]
where $p'$ is some (potentially different) basepoint of $E$. Since the general $E$ has finitely many basepoints, we conclude that
\[
\dim \Phi(V) = \dim V = \dim \mathcal{P}^s = r^2(g-1) + 1 - (d-rg+(r-1)(j-1)).
\]
\end{proof}

The next proposition shows that the expected dimension is an upper bound on the dimension of a component of $N_C(r,d)$. Moreover, it shows that any $E$ in $N_C(r,d)$ must be an extension similar to one of the ones constructed in Proposition \ref{thm:existence_stable_seq} because any $E$ with a basepoint at $p$ must admit a map to $\O(K+p)$.

\begin{prop}\label{prop:upperbound_vb}
Any stable $E \in U_C(r,d)$ with $rg+1 \leq d \leq r(2g-1)-1$ and a basepoint at $p$ is an extension of the form
    \[
    0 \to F \to E \to \O(K+p-D)\to 0,
    \]
    with $D$ effective and $p$ not in the support of $D$. Furthermore, for any integer $j \geq 1$, the dimension of the family of such extensions, as $p$ varies over $C$, $D$ varies over the symmetric product $C^{(j-1)}$, and $F$ varies over deformation families of rank $r-1$, degree $d-(2g-j)$ vector bundles whose general element is stable, is at most
    \[
    r^2(g-1) + 1 - (d-rg+(r-1)(j-1)).
    \]
\end{prop}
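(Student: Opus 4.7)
The plan is to split the statement into two parts: first I would construct the required exact sequence from the existence of a basepoint, and then set up a parameter space of extensions and count dimensions.

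For the first part, the idea is to use Serre duality to turn the basepoint condition into a nonzero morphism to a twisted line bundle. Since $p$ is a basepoint of $E$, we have $h^1(E(-p)) > h^1(E)$, and by Serre duality this reads $h^0(E^\ast(K+p)) > h^0(E^\ast(K))$. I would pick a section $s \in H^0(E^\ast(K+p))$ whose image in $H^0(E^\ast(K+p))/H^0(E^\ast(K))$ is nonzero; this gives a morphism $\phi : E \to \O(K+p)$ which does not factor through the inclusion $\O(K) \hookrightarrow \O(K+p)$. Hence the image of $\phi$ is a subsheaf of $\O(K+p)$ of the form $\O(K+p-D)$ with $D$ effective and $p \notin \mathrm{supp}(D)$. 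Taking $F = \ker \phi$, which is locally free of rank $r-1$ since the quotient is a line bundle, yields the desired sequence; stability of $E$ forces $2g-1-\deg D > d/r$, so $\deg D = j-1$ can range only over the indicated values.

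For the dimension count, I would essentially mimic the setup in the proof of Proposition \ref{prop:lowerbound_vb}, but without worrying about finiteness of the classifying map (since here we only need an upper bound). Define $\Sigma \subset C \times C^{(j-1)}$ as the open locus of pairs $(p,D)$ with $p \notin \mathrm{supp}(D)$, which has dimension $j$, and carries a tautological line bundle whose fiber over $(p,D)$ is $\O(K+p-D)$. Let $\mathcal{M}$ be a Narasimhan--Ramanan family (as in \cite[Prop. 2.4]{narasimhan-ramanan}) parameterizing rank $r-1$, degree $d-(2g-j)$ bundles with general member stable, so $\dim \mathcal{M} \leq (r-1)^2(g-1)+1$. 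Over $\Sigma \times \mathcal{M}$, for general stable $F$ the slope inequality $\mu(F) = (d-2g+j)/(r-1) < 2g-j$ (equivalent to $d < r(2g-j)$) gives $\hom(\O(K+p-D), F) = 0$, so $\ext^1(\O(K+p-D), F) = -\chi(F(-K-p+D))$ is computed by Riemann--Roch.

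Assembling these pieces gives a projective bundle $\mathcal{P} \to \Sigma \times \mathcal{M}$ whose total space parameterizes the relevant extensions, and whose dimension is
\[
j + (r-1)^2(g-1) + 1 + \bigl((2g-j-d) + (r-1)(3g-j-1) - 1\bigr).
\]
A direct expansion (the same algebraic manipulation implicit in the end of the proof of Proposition \ref{prop:lowerbound_vb}) simplifies this to $r^2(g-1) + 1 - (d-rg+(r-1)(j-1))$. The classifying map $\mathcal{P} \to U_C(r,d)$ surjects by the first part onto the locus under consideration, so its image has dimension bounded by that of $\mathcal{P}$, yielding the claimed bound. The one subtle point—and the step I expect to require the most care—is verifying that every $E$ in the target truly arises from $\mathcal{P}$ for the correct value of $j$, i.e. that the value $j-1 = \deg D$ produced in the first part of the argument is consistent with the extension data, and that the family $\mathcal{M}$ can be chosen independently of $(p,D)$ so that the product $\Sigma \times \mathcal{M}$ is a genuine parameter space.
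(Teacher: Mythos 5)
Your construction of the exact sequence is exactly the paper's: use $h^1(E(-p))>h^1(E)$, i.e.\ $\hom(E,\O(K+p))>\hom(E,\O(K))$, to produce a map $E\to\O(K+p)$ not factoring through $\O(K)$, so that its image is $\O(K+p-D)$ with $D$ effective and $p\notin\operatorname{supp}(D)$, and take $F$ to be the kernel. The dimension count is also set up the same way (base of dimension $j+(r-1)^2(g-1)+1$, fibers $\P(\Ext^1(\O(K+p-D),F))$), and your arithmetic agrees with the stated bound.

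There is, however, one genuine gap in the second half. You justify $\hom(\O(K+p-D),F)=0$, and hence the formula $\ext^1(\O(K+p-D),F)=-\chi(F(-K-p+D))$, only \emph{for general stable} $F$. But the kernels $F$ produced in the first half from an arbitrary stable $E$ need not be stable, let alone general, so a priori some of the relevant extensions could sit over the jumping locus where $\hom(\O(K+p-D),F)>0$; there $\Ext^1$ has excess dimension, your $\mathcal{P}$ is not a projective bundle, and the preimage of that locus could have dimension exceeding your count (or, if you discard it, you could fail to capture some stable $E$). The paper closes exactly this gap with a short endomorphism argument: if there were a nonzero map $L=\O(K+p-D)\to F$, the composition $E\to L\to F\hookrightarrow E$ would be a non-homothety endomorphism of the stable bundle $E$, which is impossible; hence $\hom(L,F)=0$ for \emph{every} $F$ arising from a stable $E$, so all relevant extensions lie over the locus where the fiber dimension is the expected $-\chi(F(-K-p+D))-1$. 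Adding that observation makes your argument complete; the remaining point you flag (independence of the family $\mathcal{M}$ from $(p,D)$, and summing over the finitely many values of $j$ with $2g-j>d/r$) is handled in the paper simply by noting that $F$ moves in finitely many deformation families of dimension at most $(r-1)^2(g-1)+1$ by \cite[Prop.~2.6]{narasimhan-ramanan}.
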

\begin{proof}
Let $E$ be a stable bundle with a basepoint at $p \in C$. Then $h^1(E(-p)) > h^1(E)$, and so $\hom(E, \O(K+p)) > \hom(E, \O(K))$. Let $f:E \to \O(K+p)$ be a nonzero map whose image $L \subset \O(K+p)$ is not contained in $\O(K)$, and let $F$ be its kernel, so that we have an exact sequence
\begin{equation}\label{eqn:exactseq_bundlewithbp}
0 \to F \to E \to L \to 0.
\end{equation}
Then $F$ is a vector bundle of rank $r-1$ and degree $d - \deg L$. Since the moduli space of semistable sheaves is dense in the stack of coherent sheaves, $F$ can be deformed to a stable bundle and can therefore depend on at most $(r-1)^2(g-1)+1$ moduli (see \cite[Prop 2.6]{narasimhan-ramanan}).

Since we have assumed $L$ is not a subbundle of $\O(K)$, we know that $0 = \hom(L, \O(K)) = h^1( L)$. Observe that $L$ has a basepoint at $p$: by construction, $h^1( L(-p)) = \hom(L, \O(K+p))$ is nonzero, whereas $h^1(L) = \hom(L, \O(K)) = 0$. Thus $p$ is a basepoint of $L$ and by Lemma \ref{lemma:linebundles_with_bp}, $L$ must be of the form $L=\O(K+p-q_1-\cdots-q_{j-1})$ for some points $q_1, \dots, q_{j-1}$. The dimension of the locus of such line bundles, as $p$ and the $q_i$ vary, is $j$. Furthermore, the dimension of the space of such extensions is given by
\[
\ext^1(\O(K+p-q_1-\cdots-q_{j-1}), F) = h^0(F^\ast\otimes L(K)).
\]
To compute this, we dualize the exact sequence (\ref{eqn:exactseq_bundlewithbp}) and twist by $L(K)$:
\[
0 \to \O(K) \to E^\ast \otimes L(K) \to F^\ast\otimes L(K) \to 0.
\]
We show that $h^1(F^\ast\otimes L(K)) = \hom(L,F) = 0$. If there were a nontrivial morphism $L \to F$, then the composition
\[
E \to L \to F \to E
\]
gives a nontrivial morphism $E \to E$ that is not a homothety. This is impossible because $E$ is stable. Thus $h^1(F^\ast \otimes L(K)) =0$, and so
\[
h^0(F^\ast \otimes L(K)) = 3(r-1)(g-1) + (r-1)-r(j-1)-d+2g-1
\]
The bundle $F$ can depend only on at most $(r-1)^2(g-1)+1$ moduli. Thus by moving $F$, the points $p, q_1, \dots, q_{j-1}$, and the extension in $\Ext^1(L, F)$, we obtain a family of dimension at most
\begin{align*}
    [(r-1)^2(g-1) + 1] + j + [3(r-1)(g-1) + (r-1)-r(j-1)-d+2g-1]-1&\\
    = r^2(g-1) + 1 - (d-rg+(r-1)(j-1))&
\end{align*}

Moreover, there are finitely many such families: they are determined by the degree of the line bundle $L$ in (\ref{eqn:exactseq_bundlewithbp}), and we have $\mu(E) \leq \deg L \leq 2g-1$. Thus any semistable bundle with a basepoint is in one of finitely many families of codimension at least $d-rg$.
\end{proof}

\subsection{Irreducibility of $N_C(r,d)$}
We explain here that the locus $N_C(r,d)$ is irreducible of the expected codimension $d-rg$ when $rg +g -1 \leq d \leq r(2g-1)-1$. The key idea is that $U_C(r-1,d-(2g-1))$ is irreducible, so there exists a family of stable bundles parameterized by an irreducible variety such that the classifying map surjects onto $U_C(r-1,d-(2g-1))$. We use this fact together with the constructions in the previous subsection to produce an irreducible subset of $N_C^0(r,d)$ and show that it is dense in $N_C(r,d)$ (see Theorem \ref{thm:irreducibility_n0}). We will use the same notation as in the previous subsection and continue to assume that $C$ is a smooth curve of genus $g \geq 2$.

\begin{lemma}\label{lemma:gen_elt_nc0}
Suppose $r \geq 2$ and $rg+g-1 \leq d \leq r(2g-1)-1$. Then the general element $E$ of any irreducible component of $N_C^0(r,d)$ is an extension of the form
\[
0 \to F \to E \to \O(K+p) \to 0.
\]
\end{lemma}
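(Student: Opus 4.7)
The plan is to stratify $N_C(r,d)$ by the degree of the divisor $D$ appearing in Proposition~\ref{prop:upperbound_vb}. For each $j \geq 1$, let $X_j \subseteq N_C(r,d)$ denote the set of stable bundles admitting a presentation
\[
0 \to F \to E \to \O(K+p-D) \to 0
\]
with $\deg D = j-1$ and $p \notin \mathrm{supp}(D)$. Proposition~\ref{prop:upperbound_vb} yields both the covering $N_C(r,d) = \bigcup_{j\geq 1} X_j$ and the codimension bound $\operatorname{codim}_{U_C(r,d)}(X_j) \geq d-rg+(r-1)(j-1)$. Setting $X_{\geq 2} = \bigcup_{j\geq 2} X_j$, the lemma reduces to showing that no irreducible component $Z$ of $N_C^0(r,d)$ is contained in $X_{\geq 2}$: the general point of $Z$ will then lie in $X_1$, yielding the desired extension.

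Fix such a $Z$. By Proposition~\ref{prop:determinantal_description}, $\operatorname{codim}(Z) \leq d-rg+1$, while $\operatorname{codim}(X_{\geq 2}) \geq d-rg+(r-1)$. When $r \geq 3$ the latter bound is strictly greater than the former, so $\dim X_{\geq 2} < \dim Z$, forcing $Z \not\subseteq X_{\geq 2}$. This settles the lemma for all $r \geq 3$.

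The hard part, and the main obstacle, is the rank-two case, in which the two codimension estimates coincide. Here I plan to exploit the hypothesis $d \geq 3g-1$ together with Clifford's theorem to eliminate the ``boundary'' possibility in Proposition~\ref{prop:determinantal_description}, namely that $Z$ might consist entirely of nowhere globally generated bundles. If $E \in U_C(2,d)$ were stable, nowhere globally generated and had $h^1(E)=0$, then $h^0(E) = \chi(E) = d-2g+2 \geq g+1$, and the image of the evaluation map $H^0(E) \otimes \O_C \to E$ would be a rank-one subsheaf $M$ with $h^0(M)=h^0(E)$. Its saturation $\bar M \subset E$ would be a sub-line bundle with $h^0(\bar M) \geq g+1$, while stability of $E$ forces $\deg \bar M < d/2$, i.e., $\deg \bar M \leq 2g-2$. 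This contradicts the familiar bound $h^0(L) \leq g$ for a line bundle $L$ of degree at most $2g-2$, which follows from Clifford's theorem in the special case and from Riemann--Roch in the non-special case. Thus every component of $N_C^0(2,d)$ has codimension exactly $d-rg$, strictly less than $\operatorname{codim}(X_{\geq 2})$, and the dimension argument of the previous paragraph again gives $Z \not\subseteq X_{\geq 2}$.
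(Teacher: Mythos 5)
Your argument is correct, and its skeleton is the same as the paper's: stratify $N_C(r,d)$ by $\deg D$ via Proposition \ref{prop:upperbound_vb}, kill the strata with $j\geq 2$ by comparing their codimension $d-rg+(r-1)(j-1)$ with the bound $d-rg+1$ from Proposition \ref{prop:determinantal_description} (which settles $r\geq 3$ outright), and then do extra work for $r=2$, where the two bounds collide at $d-rg+1$. Where you genuinely diverge is in the rank-two step. The paper stays inside the stratification: it notes that in a $j=2$ presentation the sub-line bundle $F$ has $\chi(F)=d-3g+3\geq 2$, hence a section and finite base locus, and concludes that $E$ has finite base locus, contradicting the ``base locus equal to all of $C$'' conclusion that Proposition \ref{prop:determinantal_description} forces when $\delta=1$. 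You instead prove directly, via the saturation of the image of the evaluation map and Clifford's theorem, that no stable rank-two $E$ with $h^1(E)=0$ and $d\geq 3g-1$ can be nowhere globally generated. Your version is self-contained and slightly stronger --- it shows that every component of $N_C^0(2,d)$ has exactly the expected codimension $d-2g$ in this range --- and it sidesteps the paper's implicit claim that an extension of two sheaves with finite base loci has finite base locus (true here, but it needs the observation that $h^0(E)>h^0(F)$, so that not every section of $E$ factors through $F$). The paper's version is shorter because it reuses the extension data already in hand. The one point worth making explicit in your write-up is that $X_{\geq 2}$ is a \emph{finite} union of strata (the degree of the quotient line bundle is pinned between $\mu(E)$ and $2g-1$), so its closure still has codimension at least $d-rg+(r-1)$; this is what the passage from $Z\not\subseteq X_{\geq 2}$ to ``the general point of $Z$ lies in $X_1$'' actually uses.
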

\begin{proof}
By Proposition \ref{prop:determinantal_description}, every component of $N_C^0(r,d)$ has dimension
\[
r^2(g-1)+1 - (d-rg + \delta),
\]
where $\delta \in \{0,1\}$. On the other hand, Proposition \ref{prop:lowerbound_vb} shows that any $E$ in $N_C(r,d)$ is of the form
\[
0 \to F \to E \to \O(K+p-q_1 - \cdots -q_{j-1}) \to 0
\]
and such families have dimension at most $r^2(g-1)+1 - (d-rg+(r-1)(j-1))$. We see that if $r \geq 3$, then the general $E$ must fit into such an exact sequence with $j=1$ and the claim is proved.

If $r=2$, the general $E$ must fit into such an exact sequence with $j=1$ or $j=2$, and $F$ must be a line bundle. If $j=1$, we are done. If $j=2$, then we have
\[
h^0(F) \geq \chi(F) = (d-(2g-2))+1-g = 2.
\]
Since $F$ has a nonzero global section, it has at most finitely many basepoints and hence $E$ does too. But Proposition \ref{prop:determinantal_description} says that if $\delta = 1$, then the general element of the component of $E$ must have the entire curve as its base locus, a contradiction.
\end{proof}

The next lemma shows that the sets $N_C^0(r,d)$ and $N_C^f(r,d)$ are dense in $N_C(r,d)$. Recall that $N_C^f(r,d)$ is the locus of stable vector bundles $E \in N_C(r,d)$ with finitely many basepoints and $h^1(E) = 0$.

\begin{prop}\label{lemma:dense_openness_nc0_ncf}
\begin{enumerate}[(a)]
    \item Suppose $r \geq 2$ and $\max\{rg+1,rg+g-r+1\} \leq d \leq r(2g-1)-1$. Then $N_C^0(r,d)$ is a dense open subset of $N_C(r,d)$.
    \item Suppose one of the following hold:
    \begin{enumerate}[(i)]
        \item $r \geq 2$ and $rg+g-1 \leq d \leq r(2g-1)-1$, or
        \item $r=1$ and $g \leq d \leq 2g-1$, or
        \item $r \geq 1$ and $d=rg$.
    \end{enumerate}
    Then $N_C^f(r,d)$ is a dense open subset of $N_C(r,d)$.
\end{enumerate}
\end{prop}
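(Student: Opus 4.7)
The plan is to verify density of $N_C^0$ and $N_C^f$ inside $N_C$ component by component, since openness of both in $N_C$ is already established (by semicontinuity and Lemma \ref{lemma:finite_bps} respectively).

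For (a), I will start from Proposition \ref{prop:upperbound_vb}: every irreducible component of $N_C(r,d)$ is dominated by an irreducible family of extensions
\[
0 \to F \to E \to \O(K+p-D) \to 0,
\]
with $F$ stable of rank $r-1$ and degree $d-(2g-j)$, $D \in C^{(j-1)}$ effective, and $p \notin \mathrm{supp}(D)$, for some $j \geq 1$. The goal is to show that the generic such $E$ has $h^1(E)=0$. From the long exact sequence, $h^1(E) \leq h^1(F) + h^1(\O(K+p-D))$. The second summand vanishes for generic $(p,D)$: by Serre duality it equals $h^0(\O(D-p))$, and the unique section (up to scalar) of a generic effective divisor of degree $j-1 \leq g-1$ vanishes on $\mathrm{supp}(D)$ but not at a generic $p$. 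The first summand vanishes for generic $F$ by Proposition \ref{prop:easy_vb_facts}(b) once $\deg F > (r-1)(g-1)$, and the hypothesis $d \geq \max\{rg+1,\, rg+g-r+1\}$ is tailored precisely so that this inequality holds for every $1 \leq j \leq g-1$, the binding case being $j=1$.

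For the easy subcases of (b): when $r=1$ and $d \geq g$, Riemann-Roch gives $h^0(L) \geq 1$ for every degree-$d$ line bundle $L$, so its basepoints are contained in the finite zero-set of any nonzero section, and since $h^1=0$ holds on a dense open of $\pic^d(C)$, density in (b)(ii) follows. When $d = rg$, Proposition \ref{prop:easy_vb_facts}(a) makes the generic stable bundle non--globally generated, so $N_C(r,rg)$ coincides with the stable locus of the irreducible variety $U_C(r,rg)$; the nonempty open subset $N_C^f(r,rg)$ supplied by Lemma \ref{lemma:finite_bps} is then automatically dense, proving (b)(iii).

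For the principal case (b)(i), I will combine part (a) with Lemma \ref{lemma:gen_elt_nc0} to reduce to extensions $0 \to F \to E \to \O(K+p) \to 0$ with $F$ a generic stable bundle of rank $r-1$ and degree $d-(2g-1)$. A short cohomology computation (twisting the extension by $\O(-q)$) shows that when $h^1(F)=0$, a point $q$ can be a basepoint of $E$ only if $q=p$ or $q$ is a basepoint of $F$; hence the basepoints of $E$ form a finite set provided those of $F$ do. It therefore suffices to prove the generic such $F$ has finitely many basepoints. When $\deg F \geq (r-1)g+1$ (i.e.\ $d \geq rg+g$), the generic $F$ is globally generated by Proposition \ref{prop:easy_vb_facts}(e), and the conclusion is immediate. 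The main obstacle is the boundary $d = rg+g-1$, where $\deg F = (r-1)g$ and the generic $F$ fails to be globally generated; I will handle this by observing that in this case $N_C(r-1,(r-1)g)$ equals the stable locus of the irreducible variety $U_C(r-1,(r-1)g)$ by the same argument used in (b)(iii), so the nonempty open subset $N_C^f(r-1,(r-1)g)$ from Lemma \ref{lemma:finite_bps} is dense, and the generic $F$ does have finitely many basepoints.
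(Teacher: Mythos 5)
Your local computations are all correct, but the overall architecture has a genuine gap: you repeatedly pass from ``the generic extension in a universal family has property $P$'' to ``the generic element of each irreducible component of $N_C(r,d)$ has property $P$,'' and nothing you cite justifies that passage. In (a), Proposition \ref{prop:upperbound_vb} only places a given component $Z$ inside the closure of the image of one of finitely many irreducible extension families $\mathcal{P}$; it does not say $Z$ is dominated by $\mathcal{P}$ itself, nor that the generic point of $Z$ is the image of a generic point of $\mathcal{P}$. (One cannot identify $\overline{\Phi(\mathcal{P}^s)}$ with $Z$, because a stable extension with $h^1(E)>0$ need not have a basepoint, so $\Phi(\mathcal{P}^s)\not\subseteq N_C(r,d)$ in general.) A priori $Z$ could lie entirely over the proper closed locus of $\mathcal{P}$ where $h^1(F)>0$ or $h^1(\O(K+p-D))>0$, in which case $Z\cap N_C^0(r,d)=\emptyset$ and your genericity computation says nothing about $Z$. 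The same conflation is explicit in (b)(i): Lemma \ref{lemma:gen_elt_nc0} produces, for the general $E$ in a component of $N_C^0(r,d)$, an extension by \emph{some} $F$, not by a generic (or even stable) one; showing the generic $F\in U_C(r-1,d-(2g-1))$ has finitely many basepoints does not show the $F$ attached to the generic $E$ does. In particular you have not excluded a component consisting of nowhere globally generated bundles (the codimension $d-rg+1$ alternative in Proposition \ref{prop:determinantal_description}, which Lemma \ref{lemma:gen_elt_nc0} disposes of only for $r=2$), and on such a component $N_C^f(r,d)$ would be empty.

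The repair is to argue pointwise rather than componentwise, which is what the paper does: given an \emph{arbitrary} $E\in N_C(r,d)$, take the specific extension $0\to F\to E\to \O(K+p-D)\to 0$ it sits in, place $F$ in an irreducible family (Narasimhan--Ramanan) that also contains a stable bundle with $h^1=0$ (and, for (b)(i), finitely many basepoints), let $(p,D)$ and the extension class move as well, and note that stability, $h^1=0$, having a basepoint, and finiteness of the base locus all hold on a nonempty open subset of this irreducible parameter space; hence $E\in\overline{N_C^0(r,d)}$ (resp.\ $\overline{N_C^f(r,d)}$). Your numerical inputs --- that $d\geq rg+g-r+1$ is exactly what forces $\deg F>(r-1)(g-1)$ in the binding case $j=1$, that $h^1(\O(K+p-D))=h^0(\O(D-p))=0$ generically, that the basepoints of $E$ lie among $p$ and the basepoints of $F$ when $h^1(F)=0$, and the handling of the boundary degree $(r-1)g$ --- are precisely what this deformation argument consumes, so the fix is a reorganization rather than new mathematics. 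Two smaller points: in (b)(iii) and in your boundary case of (b)(i), $N_C(r,rg)$ is dense in, but need not equal, the stable locus of $U_C(r,rg)$ (density suffices); and in (b)(ii) the step ``$\{h^1=0\}$ is dense in $\pic^d(C)$, hence dense in $N_C(1,d)$'' needs the irreducibility of $N_C(1,d)$ from Section \ref{sec:linebundles}, since a dense open subset of $\pic^d(C)$ need not meet a subvariety densely.
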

\begin{proof}
We have seen that both $N_C^0(r,d)$ and $N_C^f(r,d)$ are nonempty open subsets of $N_C(r,d)$. It remains to show denseness under the hypotheses of the lemma.

To show (a), suppose $E \in N_C(r,d)$ has a basepoint at $p$. Then by Proposition \ref{prop:lowerbound_vb}, there is an exact sequence
    \[
    0 \to F \to E \to \O(K+p - q_1 -\cdots -q_{j-1}) \to 0
    \]
    for some points $q_1, \dots, q_{j-1}$. Note that
    \begin{align*}
        \chi(F) &= d-(2g-j) + (r-1)(1-g)\\
        &\geq (rg+g-r+1)-(2g-j)+(r-1)-(rg-g)\\
        &\geq 1.
    \end{align*}
We will construct a subset of $N_C^0(r,d)$ whose closure contains $E$. Let $\mathscr{F} \to T \times C$ be a family of vector bundles of rank $r-1$ and degree $d-(2g-j)$ parameterized by an irreducible variety $T$ containing $\mathscr{F}_0=F$ and a stable bundle $\mathscr{F}_1$ satisfying $h^1(\mathscr{F}_1) = 0$ (see \cite[Prop 2.6]{narasimhan-ramanan}). Define the incidence correspondence $\Sigma \subset T \times C^{\times j}$ by
\[
    \Sigma = \{(t,p',q_1',\dots,q_{j-1}') : p' \neq q'_{i} \text{ for all }i, \hom(\O(K+p'-q_1'-\cdots-q_{j-1}',\mathscr{F}_t) = 0\}.
    \]
Note that $\Sigma$ is nonempty because $(0,p,q_1,\dots,q_{j-1}) \in \Sigma$. Let $p:\mathcal{P} \to \Sigma$ denote the family parameterizing extensions $\P(\Ext^1(\O(K+p'-q_1'-\cdots-q_{j-1}'), \mathscr{F}_t))$ as $(t,p',q_1',\dots,q_{j-1}')$ varies over $\Sigma$. By the definition of $\Sigma$ and the fact that $\Sigma$ is irreducible, we see that the fibers $\Ext^1(\O(K+p'-q_1'-\cdots-q_{j-1}'), \mathscr{F}_t)$ of $\mathcal{P}$ are irreducible and equidimensional. Thus $\mathcal{P}$ itself is irreducible.

Define the open subset $\Sigma^\circ \subset \Sigma$ by
\[
\Sigma^\circ = \{(t,p',q_1',\dots,q_{j-1}') : h^1(\mathscr{F}_t) = h^1(\O(K+p'-q_1'-\cdots-q_{j-1}') = 0\}.
\]
Observe that for general $p', q_1',\dots, q_{j-1}'$, we have $\hom(\O(K+p'-q_1'-\cdots-q_{j-1}',\mathscr{F}_1) = 0$ because $\mathscr{F}_1$ is stable of slope $(d-(2g-j))/(r-1) < 2g-j$. Thus $\Sigma^\circ$ is nonempty and clearly open in $\Sigma$.

Let $\mathcal{P}^s \subset \mathcal{P}$ denote the subset of $\mathcal{P}$ parameterizing stable extensions. Then we have the canonical rational map
\[
\Phi: \mathcal{P}^s \to U_C(r,d).
\]
Note that $\Phi(\mathcal{P}^s) \cap N_C^0(r,d)$ is nonempty. Indeed, $\mathcal{P}$ is irreducible and both $\mathcal{P}^s$ and $p^{-1}(\Sigma^\circ)$ are nonempty open subsets. Thus $\mathcal{P}^s \cap p^{-1}(\Sigma^\circ)$ is nonempty, and any point in the intersection maps to $N_C^0(r,d)$ under $\Phi$. That is, $\Phi(\mathcal{P}^s \cap p^{-1}(\Sigma^\circ))$ is contained in $N_C^0(r,d)$ and $E$ is in its closure, thus $E$ is in the closure of $N_C^0(r,d)$. This gives (a).

We now prove the statements in part (b).

\begin{enumerate}[(i)]
    \item We argue as in (a) and omit most of the details. Note that the assumption $d \geq rg+g-1$ implies $\chi(F) \geq r-1$, which allows us to deform the bundle $F$ to one that is not only stable but also has finitely many basepoints. In the definition of the incidence correspondence $\Sigma^\circ$, we require further that $\mathscr{F}_t$ have finitely many basepoints.
    \item Since $g \leq d \leq 2g-1$, every line bundle of degree $d$ has a non-zero global section. The zero locus of this section is a finite collection of points containing the base locus of $E$.
    \item The assumption $d=rg$ implies that the general $E$ in $U_C(r,d)$ has $h^0(E) = r$ and $h^1(E) = 0$. In particular, the general $E$ in $U_C(r,d)$ has finitely many basepoints and so $N_C^f(r,d)$ is dense in $U_C(r,d)$ and \emph{a fortiori} is also dense in $N_C(r,d)$.
\end{enumerate}
\end{proof}

\begin{thm}\label{thm:irreducibility_n0}
Suppose $rg+g-1 \leq d \leq r(2g-1)-1$. Then $N_C(r,d)$ is irreducible of the expected codimension $d-rg$.
\end{thm}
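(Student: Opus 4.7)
The plan is to build a single irreducible parameter space whose classifying map into $U_C(r,d)$ has image dense in $N_C(r,d)$; this forces $N_C(r,d)$ to be irreducible, after which the codimension follows from the existing bounds.

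First, by Proposition~\ref{lemma:dense_openness_nc0_ncf}(b)(i) the locus $N_C^f(r,d)$, and hence $N_C^0(r,d)$, is a dense open subset of $N_C(r,d)$, so it is enough to prove that $N_C^0(r,d)$ is irreducible. By Lemma~\ref{lemma:gen_elt_nc0}, the generic point $E$ of every irreducible component of $N_C^0(r,d)$ fits into an exact sequence
\[
0 \to F \to E \to \O(K+p) \to 0
\]
with $F$ a vector bundle of rank $r-1$ and degree $d-(2g-1)$ and $p\in C$. So every component is dominated by the classifying map out of the parameter space of such extensions, and it suffices to show that this parameter space can be chosen irreducible.

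I run the construction of Proposition~\ref{prop:lowerbound_vb} with $j=1$. Since $U_C(r-1, d-(2g-1))$ is irreducible, after replacing the \'etale cover $\mathcal{M}\to U_C(r-1, d-(2g-1))$ (supporting a Poincar\'e family $\mathscr{F}$) by an irreducible component that still surjects, I may assume $\mathcal{M}$ is irreducible; when $r=2$, one may simply take $\mathcal{M} = U_C(1, d-(2g-1))$. Form the family $\mathcal{P}\to \mathcal{M}\times C$ whose fiber at $(m,p)$ is $\P\bigl(\Ext^1(\O(K+p), \mathscr{F}_m)\bigr)$. Since each $\mathscr{F}_m$ is stable of slope $(d-2g+1)/(r-1) < 2g-1$, we have $\hom(\O(K+p), \mathscr{F}_m) = 0$, so these $\Ext^1$'s have constant dimension and $\mathcal{P}$ is an irreducible projective bundle over the irreducible base $\mathcal{M}\times C$.

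Let $\mathcal{P}^s\subset \mathcal{P}$ be the open subset parameterizing extensions whose total bundle $E$ is stable and satisfies $h^1(E)=0$; this is nonempty by Proposition~\ref{thm:existence_stable_seq} together with the openness argument in Proposition~\ref{lemma:dense_openness_nc0_ncf}(a). The resulting classifying map $\Phi\colon \mathcal{P}^s \to N_C^0(r,d)$ has image whose closure $Z = \overline{\Phi(\mathcal{P}^s)}$ is irreducible. By Lemma~\ref{lemma:gen_elt_nc0}, every irreducible component $X_j$ of $N_C^0(r,d)$ has its generic point in the image of $\Phi$, so $X_j\subset Z$ for every $j$. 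But then $N_C^0(r,d) = \bigcup_j X_j \subset Z \subset N_C^0(r,d)$, forcing $N_C^0(r,d) = Z$, which is irreducible. By density $N_C(r,d)$ is irreducible. For the codimension, Corollary~\ref{cor:family_of_exp_dim} supplies a component of the expected dimension $r^2(g-1)+1-(d-rg)$, while Proposition~\ref{prop:upperbound_vb} (equivalently the Sundaram bound (\ref{eqn:expected_dimension})) rules out any larger component; since $N_C(r,d)$ is irreducible it must have codimension exactly $d-rg$. The main technical point to watch is the irreducibility of the base of the Poincar\'e family; this is handled exactly as in the proof of Proposition~\ref{prop:lowerbound_vb} by passing to a single irreducible component of $\mathcal{M}$ that still surjects onto $U_C(r-1, d-(2g-1))$.
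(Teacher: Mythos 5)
Your overall strategy is the same as the paper's (pass to the dense open subset $N_C^0(r,d)$, use Lemma \ref{lemma:gen_elt_nc0} to put the generic point of each component into an extension $0 \to F \to E \to \O(K+p) \to 0$, and sweep these out by an irreducible extension family over an irreducible cover of $U_C(r-1,d-(2g-1))$), but there is a genuine gap at the key step. Lemma \ref{lemma:gen_elt_nc0} only produces a kernel $F$ that is \emph{some} vector bundle of rank $r-1$ and degree $d-(2g-1)$; nothing in that lemma, or in Proposition \ref{prop:upperbound_vb} from which it is derived, guarantees that $F$ is stable (or even semistable). Your parameter space $\mathcal{P}$, built from a Poincar\'e family over a cover of the moduli space of \emph{stable} bundles, only sees extensions with stable kernel. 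So the assertion ``every irreducible component $X_j$ of $N_C^0(r,d)$ has its generic point in the image of $\Phi$'' is unjustified: the generic $E$ of a component might only admit extensions with unstable $F$, and then it need not lie in $\Phi(\mathcal{P}^s)$ at all. This is not a cosmetic issue --- it is the main technical content of the second half of the paper's proof, which you have replaced by an assertion.

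The paper's fix is a deformation argument you would need to supply: given the generic $E$ with its (possibly unstable) kernel $F$, choose an irreducible family $\mathscr{T} \to T \times C$ with $\mathscr{T}_0 = F$ and $\mathscr{T}_t$ stable for general $t$ (\cite[Prop 2.6]{narasimhan-ramanan}), restrict to the open locus where $\hom(\O(K+p'),\mathscr{T}_t)=0$ so that the resulting relative $\Ext^1$ spaces have constant dimension and the total extension family $\mathcal{B}$ is irreducible, and observe that the general member of $\mathcal{B}^s$ has stable kernel with vanishing $h^1$ and hence lies in $\Phi(\mathcal{P}^s)\cap N_C^0(r,d)$; irreducibility of $\mathcal{B}$ then places $E$ in the closure of that set. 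A smaller point: you should take the closure $Z=\overline{\Phi(\mathcal{P}^s)}$ inside $N_C^0(r,d)$ (or intersect with it), since the closure in $U_C(r,d)$ may contain globally generated bundles, so the inclusion $Z\subset N_C^0(r,d)$ as written needs care. The concluding codimension argument via Corollary \ref{cor:family_of_exp_dim} and the Sundaram bound is fine.
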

\begin{proof}
We observe that by Proposition \ref{lemma:dense_openness_nc0_ncf} it suffices to show that $N_C^0(r,d)$ is irreducible. Using, for example, \cite[Prop 2.6]{narasimhan-ramanan}, we see that there exists an irreducible smooth variety $\mathcal{M}$ together with a family $ \mathscr{F} \to \mathcal{M} \times C$ of stable vector bundles of rank $r$ and degree $d$ such that the classifying map 
\[ \theta_{\mathscr{F}} : \mathcal{M} \to U_C(r-1,d - (2g-1)) \] 
is surjective. Let $\mathcal{P}$ denote the family parameterizing extensions of the form
\begin{equation}\label{eqn:general_exact_seq}
0 \to \mathscr{F}_m \to E \to \O(K+p) \to 0
\end{equation}
as $m$ varies over $\mathcal{M}$ and $p$ varies over $C$. Then $\mathcal{P}$ is irreducible, as is the open subset $\mathcal{P}^s \subset \mathcal{P}$ parameterizing stable bundles $E$. Consequently, $\Phi(\mathcal{P}^s) \cap N_C^0(r,d)$ is irreducible in $N_C(r,d)$. As a consequence of Lemma \ref{lemma:gen_elt_nc0}, the general element $E$ in $N_C^0(r,d)$ can be expressed as 
\begin{equation}\label{eqn:general_elt_N_0}
0 \to F \to E \to \mathcal{O}(K+p) \to 0.
\end{equation}
We wish to show that such an $E$ is in the closure of $\Phi(\mathcal{P}^s)\cap N_C^0(r,d)$. Note that the bundle $F$ may not be stable, but there is a smooth irreducible variety $T$ parameterizing a family of vector bundles $\mathscr{T} \to T \times C$ with fiber $\mathscr{T}_0 = F$ and $\mathscr{T}_t$ stable for the general $t \in T$ (see \cite[Prop 2.6]{narasimhan-ramanan}). Define the incidence correspondence
\[
\Sigma = \{(t,p') : \hom(\O(K+p'), \mathscr{T}_t) = 0 \} \subset T \times C.
\]
Note that $\Sigma$ is open and nonempty because $(0,p) \in \Sigma$. Let $\mathcal{B}$ denote the space parameterizing the projective spaces $\P(\Ext^1(\O(K+p'),\mathscr{T}_t))$. Since $\Sigma$ is irreducible and the fibers of $\mathcal{B} \to \Sigma$ are irreducible of the same dimension, it follows that $\mathcal{B}$ is irreducible. 

Let $\mathcal{B}^s$ be the nonempty open subset of $\mathcal{B}$ parameterizing stable vector bundles, and let $\Psi: \mathcal{B}^s \to U_C(r,d)$ denote the canonical rational map. The general element $(t,p') \in \Sigma$ has $h^1(\mathscr{T}_t) = 0$, $\mathscr{T}_t$ stable, and $\mathscr{T}_t = \mathscr{F}_m$ for some $m \in \mathcal{M}$. Thus the general element of $\Psi(\mathcal{B}^s)$ comes from an exact sequence
\[
0 \to \mathscr{T}_t \to E' \to \O(K+p') \to 0
\]
with $h^1(\mathscr{T}_t) = 0$. Since $h^1(\O(K+p')) =0$, we must have $h^1(E') = 0$. Thus $E'$ is in $\Phi(\mathcal{P}^s) \cap N_C^0(r,d)$. In particular, $E$ is in the closure of $\Phi(\mathcal{P}^s ) \cap N_C^0(r,d)$, which we have seen is irreducible. Since the general element of any irreducible component is of the form (\ref{eqn:general_elt_N_0}) by Lemma \ref{lemma:gen_elt_nc0}, it follows that $N_C^0(r,d)$ is itself irreducible.
\end{proof}

\bibliographystyle{plain}

\end{document}